\theoremstyle:=definition,remark,plain\do{%
        \expandafter\g@addto@macro\csname th@\theoremstyle\endcsname{%
            \addtolength\thm@preskip\parskip
            }%
        }
 \renewenvironment{abstract}
 { \normalsize
  \list{}{\setlength{\leftmargin}{.0cm}%
    \setlength{\rightmargin}{\leftmargin}}%
  \item {\bf \abstractname.} \relax}
 {\endlist}
\titlespacing*{\paragraph}{0pt}{3.25ex plus 1ex minus .2ex}{0.5ex plus .2ex}
\definecolor{dnrbl}{rgb}{0,0,0.3}
\definecolor{dnrgr}{rgb}{0,0.3,0}
\definecolor{dnrre}{rgb}{0.5,0,0}
\theoremstyle{plain}
\newtheorem{thm}{Theorem}[section]
\newtheorem{lem}[thm]{Lemma}
\theoremstyle{definition}
\newtheorem{defi}[thm]{Definition}
\newcommand{\Nat}{\mathbb{N}}
\newcommand{\restr}{\upharpoonright}  %restriction
\newcommand{\un}{\uparrow} %undefined
\newcommand{\de}{\downarrow} %defined
\newcommand{\bigo}[1]{\mathop{\bf O}\/\left({#1}\right)}
\newcommand{\sqbrad}[2]{\left\{\ {#1} : {#2}\ \right\}}
\newcommand{\bigon}{\textrm{\textup O}\hspace{0.02cm}(1)}
\newcommand{\MM}{\mathcal{M}}
\newcommand{\abs}[1]{\left|{#1}\right|}
\newcommand{\parb}[1]{\big({#1}\big)}
\newcommand{\parB}[1]{\Big(\hspace{0.04cm}{#1}\hspace{0.04cm}\Big)}
\newcommand{\Dim}{\mathrm{Dim}}
\newcommand{\ml}{Martin-L\"{o}f }
\newcommand{\ce}{c.e.\ }
\newcommand{\lce}{left-c.e.\ }
\newcommand{\pf}{prefix-free }
\newcommand{\fand}{\ \wedge\ }
\newcommand{\twomel}{2^{<\omega}}
\title{Aspects of Muchnik's paradox in restricted betting 
\thanks{Supported by NSFC grant No.~11971501.}}
\author{George Barmpalias  \and Lu Liu} 
\date{\today\  at\ \currenttime}
\begin{document}
\maketitle
\begin{abstract}
Muchnik's paradox says that enumerable betting strategies are not always reducible to
enumerable strategies whose bets are restricted to either even rounds or odd rounds. 
In other words, there are outcome sequences $x$ where an effectively enumerable strategy succeeds, but no
such parity-restricted effectively enumerable strategy does.  We characterize the effective Hausdorff dimension of
such $x$, showing that it can be as low as 1/2 but not less. We also show that such reals
that are random with respect to parity-restricted effectively enumerable strategies
with packing dimension as low as $\log\sqrt{3}$. 
Finally we exhibit Muchnik's paradox in the case of computable integer-valued strategies.
\end{abstract}
\vspace*{\fill}
\noindent{\bf George Barmpalias}\\[0.5em]
\noindent State Key Lab of Computer Science,
Institute of Software, Chinese Academy of Sciences, Beijing, China. 
\textit{E-mail:} \texttt{\textcolor{dnrgr}{barmpalias@gmail.com}}.
\textit{Web:} \texttt{\textcolor{dnrre}{http://barmpalias.net}}\par
\addvspace{\medskipamount}\medskip\medskip\medskip\medskip
\noindent{\bf Lu Liu}\\[0.2em]  
\noindent Department of Mathematics, Central South University,
ChangSha 410083, China \\%[0.2em]
\textit{E-mail:} \texttt{\textcolor{dnrgr}{g.jiayi.liu@gmail.com.}}
%\textit{Web:} \texttt{\textcolor{dnrre}{http://lewis-pye.com.}} 
\vfill \thispagestyle{empty}
\clearpage
\setcounter{tocdepth}{1}
\tableofcontents

\section{Introduction}
A {\em casino} may be determined by the set of binary outcome sequences $x$ that it produces, where
sequential bets can be placed by players using {\em wagers} they can afford (namely at most their current capital).
A betting strategy may be qualified by the amount of information that it uses, for example its computational or arithmetical complexity.
Suppose that a player can amass unbounded fortune while betting on a casino sequence $x$, starting from a finite {\em initial capital}.
\begin{equation}\label{ZilqXlk8ry}
\parbox{12cm}{Can this player also win when she is restricted to bet on a 
specific subset of stages/bits, say the even or the odd bits of $x$?}
\end{equation}
It turns out that the answer depends on the computational complexity or effectivity of the strategies considered.
To be more precise, we introduce some standard formalism.
Following \citet{villemartin}, a betting strategy can be modeled by a non-negative function $\sigma\mapsto M(\sigma)$ 
on the finite binary strings $\twomel$ representing the capital of the player at state $\sigma$, such that 
\begin{equation}\label{rc2wGxhGY9}
M(\sigma)=\frac{M(\sigma\ast 0)+M(\sigma\ast 1)}{2}.
\end{equation}
We call such functions {\em martingales} and note that they have a probabilistic interpretation: the expected {\em capital gain} 
after a bet is zero. In this formalization of strategies the  wagers and favorable outcomes are implicit: 
if $M(\sigma\ast 0)<M(\sigma\ast 1)$ then the player bets on 1 the amount $M(\sigma\ast 1)-M(\sigma)$,
which he doubles or loses depending on the outcome. The case
$M(\sigma\ast 0)>M(\sigma\ast 1)$ is analogous while $M(\sigma\ast 0)=M(\sigma\ast 1)$ means that the player does not bet. 
Betting strategies that account for inflation or saving (permanently taking capital out of the casino) 
are modeled by {\em supermartingales} which are functions $M$ such that
\eqref{rc2wGxhGY9} holds with $=$ replaced by $\geq$, hence the expected gain is no more than 0.
We say that {\em $M$ succeeds on $x$} if $\limsup_n M(x\restr_n)=\infty$ and say that $M$ is 
\begin{itemize}
\item {\em even-parity} if $M(\sigma\ast i)=M(\sigma)$ for all $\sigma$ of even length and $i<2$;
\item {\em odd-parity} if $M(\sigma\ast i)=M(\sigma)$ for all $\sigma$ of odd length and $i<2$;
\end{itemize}
and refer to either case as a {\em single-parity (super)martingale}.

\citet{muchnik2009algorithmic} showed  for the class of computable strategies, question \eqref{ZilqXlk8ry} has a positive answer:
\begin{equation}\label{REDed5hDo}
\parbox{13cm}{every computable martingale is the product
of an even-parity and an odd-parity martingale, and the same is true of supermartingales.}
\end{equation}
Betting on $x$ with  an even-parity strategy is quite stronger to 
betting on a casino sequence consisting of the even bits of $x$: an even betting strategy has access to all the previous 
(including the odd) bits of $x$, which may inform the current bet.  For example, if $x$ is algorithmically random then
a computable even-parity strategy succeeds on $x\oplus x$ but no computable strategy succeeds 
on $x$.\footnote{$x\oplus y:=x_0 y_0 x_1 y_1\dots$, where 
$x:=x_0 x_1\dots$ and  $y:=y_0 y_1\dots$.}
Computable martingales were proposed as a foundation of algorithmic information by \citet{Schnorr75}
but do not capture the standard notion of randomness by \citet{MR0223179}. For the latter we need to consider the 
class of {\em \lce (super)martingales} $M$, where a function is \lce if its values are 
uniformly computably approximable from below: $x$ is \ml random iff no \lce (super)martingale succeeds on $x$. 
A \lce (super)martingale whose wager-function is also \lce is called {\em strongly \lce (super)martingale};
note that a \lce (super)martingale is not necessarily strongly \lce

\citet{muchnik2009algorithmic} showed that \eqref{ZilqXlk8ry} has a negative answer for the class of \lce (super)martingales:
\begin{equation}\label{kHxsCRcey4}
\parbox{13cm}{{\em Muchnik's paradox:} there exists $x$ such that some \lce martingale succeeds on $x$ but no
even-parity or odd-parity \lce (super)martingale succeeds on $x$.}
\end{equation}
This means that, in the context of enumerable bets (\lce strategies) a player can win on $x$, 
but can no longer win if restricted to single-parity strategies.
This  has been called Muchnik's paradox and was discussed extensively
by \citet{ChernovSVV08} in the context of {\em online complexity} and prediction, as well as by 
\citet{stacsBauwens14} who produced Kolmogorov complexity versions of it and
related it to real-life scenarios and the theory of causality of timeseries.

\citet{muchnik2009algorithmic} does not give any information about the descriptive complexity of $x$ in 
\eqref{kHxsCRcey4}, other than it is not \ml random.
The present work deals with two aspects of \eqref{kHxsCRcey4}:
\begin{enumerate}[\hspace{0.3cm}(a)]
\item  How predictable can $x$ be, while still immune to single-parity guessing? 
\item Does this paradox occur in the context of other strategy restrictions such as minimum bets?
\end{enumerate}
{\bf Our results.} In \S\ref{TCPLGlg6St} we show that there exists $x$ which is strongly predictable, in the sense that
\begin{equation}\label{OP8RKSNpj}
\parbox{13cm}{betting on successive {\em pairs  of bits} we need at 
most three guesses to guess correctly}
\end{equation}
(out of the possible four possibilities) 
 but no single-parity \lce supermartingale succeeds on $x$. This strong version of \eqref{kHxsCRcey4}
implies the following.
\begin{thm}[Packing dimension and single-parity betting]\label{Px6x4FnWQw}
There exists $x$ with effective packing dimension 
$\Dim (x)\leq \log \sqrt{3}$ and all single-parity  \lce supermartingales fail on $x$. 
\end{thm}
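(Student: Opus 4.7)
The plan is to derive this theorem as a quick corollary of the strong predictability statement \eqref{OP8RKSNpj}, whose construction is carried out in \S\ref{TCPLGlg6St}, via the Kolmogorov-complexity characterization $\Dim(x) = \limsup_n K(x\restr_n)/n$ of effective packing dimension. The only content I extract from \eqref{OP8RKSNpj} beyond the single-parity failure clause is an effective procedure $p$ which, on input $x\restr_{2n}$, enumerates at most three pairs in $\{00,01,10,11\}$ containing $x(2n)x(2n+1)$.

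The main step is then immediate: iterating $p$ from the empty string, the prefix $x\restr_{2n}$ is determined by the sequence of $n$ ternary choices specifying, at each step $i<n$, which element of the three-element set $p(x\restr_{2i})$ equals $x(2i)x(2i+1)$. This yields a description of $x\restr_{2n}$ of total length at most $n\log 3 + O(\log n)$ (the $O(\log n)$ term accounting for encoding $n$), so $K(x\restr_{2n}) \leq n\log 3 + O(\log n)$, and trivially $K(x\restr_{2n+1}) \leq K(x\restr_{2n}) + O(1)$. Hence $\limsup_n K(x\restr_n)/n \leq (\log 3)/2 = \log\sqrt{3}$, giving $\Dim(x) \leq \log\sqrt{3}$.

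Equivalently, one may build the witness directly as an \lce supermartingale: from $p$ construct a \lce martingale $N$ which, at each even-length prefix $\sigma$ of $x$, redistributes its current capital uniformly over the at-most-three pair-extensions $\sigma\tau$ with $\tau\in p(\sigma)$ and places nothing on the fourth. Then $N(x\restr_{2n})=(4/3)^n N(\estring)=2^{n(2-\log 3)}N(\estring)$, and setting $s:=\log\sqrt{3}$ and $M(\sigma):=N(\sigma)\cdot 2^{-(1-s)|\sigma|}$, one verifies that $M$ is a \lce $s$-supermartingale and the identity $2-\log 3 = 2(1-s)$ makes $\liminf_n M(x\restr_n)>0$, again witnessing $\Dim(x)\leq \log\sqrt{3}$.

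The main obstacle is not any of these translations, which are entirely mechanical, but ensuring that the construction in \S\ref{TCPLGlg6St} really supplies an \emph{effective} procedure $p$ rather than just the existence of such a ``three out of four'' subset at each pair-stage. The strong formulation of \eqref{OP8RKSNpj} is designed precisely to provide this effectivity, and once it is extracted, the packing dimension bound is a one-line consequence.
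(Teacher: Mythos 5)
Your proposal is correct and takes essentially the same route as the paper: Theorem \ref{Px6x4FnWQw} is derived there from the construction of \S\ref{TCPLGlg6St} (Theorem \ref{yV79J5ZGp1}) via Lemma \ref{l33UQ7IjSb}, whose proof is precisely your supermartingale $N$ that scales capital by $4/3$ on the at most three surviving pair-extensions at each even level. Your Kolmogorov-complexity variant via $\Dim(x)=\limsup_n K(x\restr_n)/n$ is an equivalent standard reformulation of the same counting, and you correctly identify that all the real work lies in the effectivity of the enumeration supplied by Theorem \ref{yV79J5ZGp1}.
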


that the effective Hausdorff dimension $\dim (x)$ of $x$ is at most $\log \sqrt{3}\approx 0.79$.
As we explain in \S\ref{EznXVtJqhv}, this means that some \lce martingale succeeds on $x$ {\em exponentially fast}.

Our main result is the following characterization of the effective Hausdorff dimension of the reals that can exhibit 
Muchnik's paradox.
\begin{thm}[Hausdorff dimension and single-parity betting]\label{OYNIGDVIET}
\ 
\begin{enumerate}[(i)] 
\item if $\dim(x)<1/2$ then there exist even-parity and odd-parity (strongly) 
\lce martingales  $N,T$ which succeed on $x$:
$\lim_n N(z\restr_n)=\lim_n T(z\restr_n)=\infty$.
\item $\exists x$ with $\dim (x)=1/2$ and all single-parity  \lce supermartingales fail on $x$. 
\end{enumerate}
Moreover $x$ in (ii) can be chosen a \lce real.
\end{thm}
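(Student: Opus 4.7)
Plan for Part (i). Use the complexity-theoretic characterisation $\dim(x) = \liminf_n K(x\restr_n)/n$. If $\dim(x) < 1/2$, fix $s$ with $\dim(x) < s < 1/2$ and an infinite set of even $n = 2k$ with $K(x\restr_{2k}) \leq 2sk$. For $\sigma \in \twomel$ write $\sigma_E, \sigma_O$ for the subsequences of $\sigma$ at the bet and non-bet positions of an even-parity strategy. By a Levin--Schnorr analysis (an even-parity strategy is exactly an online predictor of bet-bits given non-bet-bits), there is a universal strongly \lce even-parity martingale $N^{\ast}$ satisfying $N^{\ast}(\sigma) \geq c \cdot 2^{|\sigma_E| - K(\sigma_E \mid \sigma_O) - O(\log|\sigma|)}$. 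The trivial bound $K(\sigma_E \mid \sigma_O) \leq K(\sigma) + O(\log|\sigma|)$ yields
\[
N^{\ast}(x\restr_{2k}) \;\geq\; 2^{\, k - K(x\restr_{2k}) - O(\log k)} \;\geq\; 2^{\,(1 - 2s)k - O(\log k)},
\]
so $\limsup_n N^{\ast}(x\restr_n) = \infty$. A standard savings modification (bank half the capital each time a fresh power-of-two threshold is surpassed) converts $N^{\ast}$ into a strongly \lce even-parity martingale $\tilde N$ with $\lim_n \tilde N(x\restr_n) = \infty$. Swapping the roles of $E$ and $O$ produces $T$.

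Plan for Part (ii). Construct $x$ as the binary expansion of a \lce real $r = \lim_s r_s$, with $(r_s)$ an increasing computable sequence of dyadic rationals, ensuring (a)~$K(x\restr_n) \leq n/2 + O(\log n)$ for infinitely many $n$, and (b)~no \lce single-parity supermartingale succeeds on $x$. Together with~(i), (a) and (b) force $\dim(x) = 1/2$. Enumerate the \lce even-parity and odd-parity supermartingales of initial value $1$ as $(M_{e,i})_i$ and $(M_{o,i})_i$, and form the weighted sums $\EE^e_t := \sum_{i\leq t} 2^{-i} M_{e,i}$ and $\EE^o_t := \sum_{i\leq t} 2^{-i} M_{o,i}$; each is a \lce single-parity supermartingale of initial value $\leq 1$, and ensuring $\sup_n \EE^e_t(x\restr_n) < \infty$ and $\sup_n \EE^o_t(x\restr_n) < \infty$ for every $t$ implies~(b). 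Extend $x$ in blocks of length $2L$: the supermartingale inequality ensures that most $2L$-bit extensions keep $\EE^e_t$ and $\EE^o_t$ bounded, and for $L$ sufficiently large relative to $t$ enough extensions remain to choose one whose bet-position bits are recovered from the non-bet-position bits via a ``mirror'' function, recording a description of $x\restr_n$ of length $n/2 + O(\log n)$ and witnessing~(a).

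Main obstacle. The technical heart is part~(ii). Any mirror function that is \emph{fixed} across stages would eventually be exploited by some \lce single-parity strategy in the enumeration, so the mirror must be drawn dynamically from an enumerable family chosen at each stage; the block length $L$ must be large enough that the current values of $\EE^e_t, \EE^o_t$ cannot yet punish the new mirror, yet small enough to maintain the overall $n/2$ compression rate. Honouring the \lce constraint on $x$---only monotone upward motions of $r_s$, together with careful handling of binary-carry events when $r$ crosses a dyadic threshold---simultaneously with this book-keeping is the delicate combinatorial core of the construction.
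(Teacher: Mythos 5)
Your part (i) reaches the right conclusion but rests on a false intermediate claim. There is no universal left-c.e.\ even-parity martingale with $N^{\ast}(\sigma)\geq c\cdot 2^{|\sigma_E|-K(\sigma_E\mid\sigma_O)-O(\log|\sigma|)}$: an even-parity strategy sees the non-bet bits only \emph{online}, whereas $K(\sigma_E\mid\sigma_O)$ conditions on all of $\sigma_O$, including bits that arrive after the bet. (Take $\sigma_O=y$ and $\sigma_E$ the reversal of $y$; then $K(\sigma_E\mid\sigma_O)=O(1)$, so the claimed bound forces capital about $2^{|y|}$ on every such $\sigma$, yet already at the first bet the wager depends only on $y_0$ while the correct prediction is the last bit of $y$, so no single online strategy can win that round for all $y$.) This online-versus-conditional gap is precisely the phenomenon behind Muchnik's paradox, so it cannot be assumed away. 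Fortunately the only consequence you use is $N^{\ast}(\sigma)\geq 2^{|\sigma|/2-K(\sigma)-O(\log|\sigma|)}$, and that is directly achievable: for each halting program $p$ with $U(p)=\tau$, launch a computable even-parity martingale of initial capital $2^{-|p|}$ that doubles along the betting positions of $\tau$ and then freezes; by Kraft's inequality the sum is a strongly left-c.e.\ even-parity martingale. With that repair (and your savings trick, which does preserve parity), your (i) is a correct variant of the paper's Lemma \ref{Fa8iXDhuyB}, which instead sums elementary strategies over a universal $1/2$-test; the two routes are essentially equivalent.

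Part (ii) is where the real content lies, and your proposal names the central difficulty without resolving it. The paper's mechanism is Lemma \ref{T2RhWOVdo}: for $M=N+T$ with $N,T$ single-parity, an increase of less than $2^{-p}$ in the (martingale-floor of the) approximation at $\sigma$ can raise the value at an extension $\tau$ by at most $2^{(|\tau|-|\sigma|)/2-p}$ --- the exponent $(|\tau|-|\sigma|)/2$ rather than $|\tau|-|\sigma|$ is exactly where the single-parity hypothesis enters, since each component bets on only half of the intervening positions. This converts late increases of the left-c.e.\ approximations into a bound of $2^{p_n}$ on the number of times the $n$-th segment must be revised, and the total weight of the descriptions issued over all revisions is then summable precisely because the compression rate $q_n$ exceeds $1/2$; see \eqref{evywicXKuu}. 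Your sketch has no analogue of this. Choosing $L$ ``large enough that the current values of $\EE^e_t,\EE^o_t$ cannot yet punish the new mirror'' controls only the present stage, not the unboundedly many future stages at which the approximations rise and force a revision; each revision costs a fresh description (or a fresh mirror index), so without an injury count the weight of your description machine need not converge and condition (a) is not established. Moreover it is exactly this half-exponent amplification bound that explains why the threshold is $1/2$ and not some other value. Until you supply a quantitative lemma of this kind --- and verify that your revision discipline moves the approximation to $x$ monotonically so that $x$ is left-c.e.\ --- part (ii) is a plan, not a proof.
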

The construction of $x$ in clause (ii) is dynamic, and $\dim (x)$ approaches 1/2 through bets on successively longer
blocks of outcomes. In this sense the betting strategy for \eqref{OP8RKSNpj} in \S\ref{TCPLGlg6St}
is simpler and more compact, at the expense of achieving a larger dimension bound $\log \sqrt{3}>1/2$.

In \S\ref{9tswL9nmI} we examine the above problem, namely the reducibility of a strategy to single-parity strategies,
in the context of computable {\em integer-valued (super)martingales}.
Such restricted strategies have been studied by
\citet{BienvenuST10}, \citet{Teu14agCWGnp},  \citet{JCSSbarmp15}
and are a special case of the the restricted wager strategies
studied by \citet{TeutChalcraft}, \citet{Peretzagainst}, \citet{Peretzwager}
 and \citet{granuma}. This aspect is interesting for the following reason:
 on the one hand computable strategies are reducible to single-parity strategies
 by \eqref{REDed5hDo}, on the other hand this decomposition by 
\citet{muchnik2009algorithmic} involves {\em scaling wagers} which is not possible if the strategies are
integer-valued.

\begin{thm}[Integer-valued]\label{5C9MB4KgaYa}\ 
There exists $z$ such that
some computable integer-valued martingale succeeds on $z$ but 
each computable integer-valued single-parity supermartingale fails on $z$.
Moreover we can ensure that $\dim(z)=0$.
%There exists $z$ such that
%some computable integer-valued martingale succeeds on $z$ but 
%each computable integer-valued single-sided supermartingale fails on $z$.
\end{thm}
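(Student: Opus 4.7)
The plan is a priority-style diagonalization producing a \emph{computable} $z$. Since $z$ is computable we have $K(z\restr_n)=O(\log n)$ and hence $\dim(z)=0$ automatically, while the computable integer-valued full martingale $M$ that simulates the construction and bets one unit on the predicted next bit of $z$ succeeds on $z$. The substance of the proof is then to show that no computable integer-valued single-parity supermartingale succeeds on this specific computable $z$.

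Fix an effective enumeration $\{N_e\}_{e\in\Nat}$ of computable integer-valued single-parity supermartingales via the standard clipping trick: take an effective list of partial computable $\mathbb{Z}$-valued functions on $\twomel$ and redefine $N_e$ to be identically $0$ past the first prefix where non-negativity, the single-parity constraint, or the supermartingale inequality is witnessed to fail. The crucial observation is that whenever $N_e$ actually bets at a string $\sigma$ (i.e.\ $N_e(\sigma\ast 0)\neq N_e(\sigma\ast 1)$), the two successor values differ by an integer of absolute value at least $1$, so choosing the argmin successor strictly reduces $N_e$'s capital by at least one unit. Define $z$ bit-by-bit: given $z\restr_n$, let $e^{\ast}(n)$ be the least $e\leq n$ such that $N_e(z\restr_n)>0$ and $N_e$ actually bets at position $n$; if such $e^{\ast}(n)$ exists set $z_n:=\arg\min_{b\in\{0,1\}} N_{e^{\ast}(n)}(z\restr_n\ast b)$, otherwise $z_n:=0$. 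The martingale $M$ is then defined by simulating this procedure to extract the predicted next bit $b(\sigma)$ and setting $M(\sigma\ast b(\sigma))=M(\sigma)+1$, $M(\sigma\ast(1-b(\sigma)))=M(\sigma)-1$ whenever $M(\sigma)\geq 1$ (and freezing at $0$ otherwise); on $z$ every prediction is correct, so $M(z\restr_n)=M(\lambda)+n\to\infty$. Visibly $M$ bets at every position where it is solvent, so it is not single-parity.

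The heart of the argument is verifying that each $N_e$ is bounded on $z$, which I would carry out by strong induction on $e$. Let $D_e$ count the bits at which $N_e$ is handled and $G_e$ the bits at which $N_e$ bets while some $N_{e'}$ with $e'<e$ is handled. Each handling reduces $N_e$ by at least $1$ (integer wager), while each ``free gain'' at most doubles $N_e$'s capital, and clearly $G_e\leq \sum_{e'<e} D_{e'}$. Combining these inequalities with the base case $D_0 \leq N_0(\lambda)$ yields finite (tower-type) bounds on both $D_e$ and $\sup_n N_e(z\restr_n)$, so every $N_e$ is bounded on $z$ and therefore fails. The main obstacle is precisely this inductive accounting: one must control the compounded growth of $N_e$ during the finite windows in which lower-priority supermartingales are still being handled, and it is exactly the integer-valued hypothesis that supplies the unit-of-decrement per handling which forces the induction to terminate.
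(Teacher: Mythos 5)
Your construction cannot work as framed, because no \emph{computable} $z$ can satisfy the theorem. If $z$ is computable, then the strategy that places a unit bet on the (known) bit $z(|\sigma|)$ at every $\sigma$ of odd length and refrains from betting at every $\sigma$ of even length is a computable, integer-valued, even-parity martingale; along $z$ it gains one unit at every other position, so its capital tends to infinity. Thus the computability of $z$, which you use both to get $\dim(z)=0$ ``automatically'' and to define the succeeding martingale $M$ by simulating the construction, is exactly what must be given up: the witness $z$ has to be non-computable, the succeeding martingale cannot be obtained by ``predicting the next bit of the construction,'' and $\dim(z)=0$ has to be arranged by hand. A related (and independently fatal) problem is the enumeration $\{N_e\}$: there is no uniformly computable list of all \emph{total} computable integer-valued single-parity supermartingales, and the clipping trick does not repair this --- if the underlying partial function diverges somewhere you cannot computably detect this and zero it out while keeping $N_e$ total. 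So the construction you describe is not effective in the first place, which is consistent with, but does not save, the point above.

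The paper sidesteps both issues by reversing the roles of the two martingales. It fixes in advance a single explicit computable integer-valued martingale $N$ (always betting one unit on outcome $1$), lists the computable integer-valued single-parity supermartingales non-effectively, and builds $z$ by finite extensions: for an even-parity opponent it forces the bits of one parity to $1$ (where $N$ gains and the opponent cannot bet) and chooses the bits of the other parity to make the opponent lose an integer unit each time it bets, so that after finitely many steps the opponent is permanently neutralized while $N$'s capital has grown past any prescribed bound. The dimension claim is then obtained by interpolating long blocks $0101\cdots$, on which $N$ loses nothing over each pair of bits but the complexity ratio $K(z\restr_n)/n$ is driven to $0$ along a subsequence. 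Your accounting of unit decrements versus capital doublings during higher-priority activity is sound in spirit and close to the paper's Lemma on neutralizing a single opponent, but it cannot be salvaged without abandoning the computability of $z$ and supplying a succeeding martingale that is defined independently of the diagonalization.
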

Single-parity strategies have an apparent similarity  to the {\em monotonous} or
{\em single-sided} strategies studied by \citet{barmpalias2020monotonous}: these are strategies that
bet on a single outcome, 0 or 1. Indeed the analogue of the decomposition \eqref{REDed5hDo}
as well as (i) of Theorem \ref{OYNIGDVIET} were shown for single-sided strategies, and in \S\ref{9tswL9nmI}
we prove an analogue of Theorem \ref{5C9MB4KgaYa} for single-sided integer-valued strategies.
However \lce single-sided strategies can be much more complex than single-parity strategies as we explain in 
\S\ref{SIqhiJbskS}, and this is the reason that (ii) of Theorem \ref{OYNIGDVIET} was only shown for the
special case of {\em strongly} \lce single-sided strategies in \citep{barmpalias2020monotonous}.
Recently, using different methods, 
\citet{sided21} showed that there exists a non-random $x$ such that all single-sided \lce supermartingales fail on $x$.
It is not known whether such a real can have effective Hausdorff dimension $<1$. This and related problems are
discussed in \S\ref{mcSrWpox} along with a summary of our results.

{\bf A word on notation.}
We maintain some regularity in our notation, using variables
$i, j, s$ for non-negative integers, $\sigma,\tau$
for strings, $x,y,z$ for reals, $M,N,T, D$ for (super)martingales. 

\section{Background on enumerable and restricted strategies}\label{EznXVtJqhv}
We show some facts about (super)martingales that are used in the next sections and 
background that is directly relevant to our results and is not readily available in the literature in this form.
General background can be found in the textbooks \citep{shenbook} and \citep{rodenisbook}.

\subsection{Enumerable strategies and restrictions}\label{SIqhiJbskS}
Intuitively, the complexity of a strategy is an expression of the information that the casino has about the player:
if a casino plays against a class of highly complex strategies, the outcome sequences that they need to produce in order to
ensure that the strategies do not win is also complex. We typically gauge complexity in terms of arithmetical definability or
computability. We say that $x$ is {\em random with respect to a class $\MM$} of (super)martingales if no member of $\MM$ succeeds on $x$.

Enumerable strategies represent a step of complexity beyond the computable strategies.
From the point of view of a constructive observer, bets are placed on the various outcomes at various times (stages),
and the final capital at a particular {\em state} (namely prefix of binary outcomes) of the game is only a limit of a computable
sequence of compound bets. This is made precise by:
\begin{equation}\label{6hgXrGpsnt}
\parbox{13cm}{A martingale is \lce if and only if it can be written as the sum of a uniformly computable
sequence of  martingales. \citep[Lemma 2.5]{barmpalias2020monotonous}.}
\end{equation}
Every computable supermartingale is bounded above by a computable martingale, but it is well-known that
the optimal \lce supermartingale is not bounded above by any \lce martingale. In fact, there is no uniform
list of all \lce martingales, and this is the reason it is often easier to work with supermartingales; this can sometimes
be done without loss of generality as whenever some \lce supermartingale succeeds on a real $x$, there exists a 
\lce martingale that succeeds on $x$. Another fact concerns the standard winning condition
$\limsup_n M(x\restr_n)=\infty$  to the stronger $\lim_n M(x\restr_n)=\infty$. It is known that
if $\limsup_n M(x\restr_n)=\infty$ for a \lce supermartingale $M$ then there exists a \lce martingale $N$ such that
$\lim_n N(x\restr_n)=\infty$; see \citep[\S 6.3]{rodenisbook}.

In the case of restricted strategies one needs to be careful as many of the above facts may fail.
For example, for some wager-restricted martingales, including the integer-valued martingales, 
the equivalence between the two winning conditions fails; see \citep{Teu14agCWGnp, granuma}.

Single-parity strategies preserve more of the standard properties above due to the fact that, by
\citet{ChernovSVV08}, they can be expressed as unrestricted strategy with {\em online access} to an oracle
(the previous outcomes that were off-limits for betting). We may say that $(\sigma,\tau)\mapsto N(\tau\mid \sigma)$ 
is an {\em online martingale} if for each $|\sigma|=|\tau|$:
\[
N(\hat\tau\ 0\mid \sigma)+N(\hat\tau\ 1\mid \sigma)=2N(\hat\tau\mid \hat\sigma)
\]
where $\hat\sigma,\hat\tau$ are the predecessors of $\sigma,\tau$ (similar for supermartingales).
This means that the bet placed at $\hat\tau$ is conditional on the last bit of $\sigma$.
Then it is clear that if $M$ is a \lce even-parity martingale, there exists an online \lce martingale $N$ 
such that $M(\sigma\oplus \tau)=N(\tau\mid \sigma)$.
\begin{lem}\label{ttrAuakcfr}
If $M$ is an even-parity \lce martingale then
\begin{enumerate}[\hspace{0.0cm}(a)]
\item there exists a uniformly computable family $(N_i)$ of even-parity martingales such that $M=\sum_i N_i$.
\item if $\limsup_n M(x\restr_n)=\infty$ there exists a \lce even-parity martingale $N$ with $\lim_n N(x\restr_n)=\infty$.
\end{enumerate}
Both clauses also hold for odd-parity martingales, and (b) also holds when $M$ is a supermartingale.
\end{lem}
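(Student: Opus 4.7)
Both clauses reduce to the corresponding unrestricted facts—(a) mirrors \eqref{6hgXrGpsnt}, and (b) mirrors the limsup-to-lim promotion recalled earlier in \S\ref{SIqhiJbskS}—via the structural observation that an even-parity binary martingale $M$ is determined by its restriction $\tilde M$ to even-length strings, which satisfies the ``two-step'' equations $\tilde M(\sigma\ast 00)+\tilde M(\sigma\ast 01)=\tilde M(\sigma\ast 10)+\tilde M(\sigma\ast 11)=2\tilde M(\sigma)$ for $|\sigma|$ even; and conversely, any such $\tilde M$ extends canonically to an even-parity binary martingale by copying the value at each even-length node to its two odd-length successors. These two-step equations are homogeneous linear constraints, preserved under pointwise non-negative linear combinations, so the class of even-parity (super)martingales is closed under the operations used in the standard constructions.

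For (a), I would adapt the proof of \eqref{6hgXrGpsnt} from \citep[Lemma 2.5]{barmpalias2020monotonous}: enumerate a \lce lower approximation of $M$ and, at each stage, compensate for the new increment by adding an atomic computable martingale. Carrying out the same enumeration with atomic two-step martingales (the even-parity analogues of the usual singleton-supported martingales) produces a uniformly computable family $(\tilde N_i)$ of two-step martingales with $\tilde M=\sum_i\tilde N_i$; extending each $\tilde N_i$ canonically to a binary even-parity martingale $N_i$ yields the required decomposition $M=\sum_iN_i$.

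For (b) with $M$ a martingale: for each $k\in \Nat$ let $L_k$ be the even-parity martingale obtained by stopping $M$ at the first even-length prefix $\tau$ where $M(\tau)\geq 2^k$, setting $L_k(\sigma)=M(\tau)$ if such $\tau\preceq\sigma$ exists and $L_k(\sigma)=M(\sigma)$ otherwise. A direct verification using the two-step structure confirms $L_k$ is an even-parity martingale with $L_k(\emptyset)\leq M(\emptyset)$ and $L_k(\sigma)<2^{k+1}$ for all $\sigma$; its \lce-ness follows by using (a) to write $M=\sum_iP_i$ with $P_i$ uniformly computable even-parity martingales and assembling $L_k$ as a \lce sum of computable stoppings of the partial sums $Q_n=\sum_{i\leq n}P_i$. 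Then $N=\sum_{k\geq 1}2^{-k}L_k$ is a \lce even-parity martingale with $N(\emptyset)\leq M(\emptyset)$ and, when $\limsup_n M(x\restr_n)=\infty$, $L_k(x\restr_n)\geq 2^k$ for all sufficiently large $n$, so $N(x\restr_n)\geq\sum_{k\leq K}2^{-k}\cdot 2^k=K$ for every $K$, giving $\lim_n N(x\restr_n)=\infty$.

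For (b) with $M$ a supermartingale, I would first pass to a dominating \lce even-parity martingale $M^{*}\geq M$ via the standard deficit-compensation: since even-parity forces the supermartingale slack to occur at odd-length nodes, setting $C(\sigma)=\tfrac12\sum_{\tau\prec\sigma,\,|\tau|\text{ odd}}\bigl(2M(\tau)-M(\tau\ast 0)-M(\tau\ast 1)\bigr)$ and $M^{*}=M+C$ yields an even-parity \lce martingale with $M^{*}(\emptyset)=M(\emptyset)$; then apply the martingale case of (b) to $M^{*}$. The odd-parity case is symmetric throughout. The principal obstacle is the \lce-ness of the stopped martingale $L_k$: since the stopping time of a \lce martingale is not effectively determinable, the assembly requires care, combining (a) with monotone approximation of the stopping values.
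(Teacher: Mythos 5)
Your structural reduction is the right one and matches the paper's: your ``two-step'' martingale $\tilde M$ on even-length strings is exactly the online martingale $N(\tau\mid\sigma)=M(\sigma\oplus\tau)$ of \citet{ChernovSVV08} that the paper uses, and your treatment of (a) --- adapt the decomposition \eqref{6hgXrGpsnt} inside this class, which is closed under non-negative linear combinations and differences --- is essentially the paper's proof of (a). The gaps are in your execution of (b). First, the \lce-ness of the stopped martingale $L_k$ is not a technicality you can defer: stopping a \lce martingale at the first (even-length) node where it exceeds $2^k$ is genuinely not a \lce operation, because the stopping node migrates to shorter prefixes as the approximation grows, and the value at the earlier stopping node can be \emph{smaller} than the value at the later one (e.g.\ $M_s$ first exceeds $2^k$ at $\sigma$ with $M_s(\sigma)$ close to $2^{k+1}$, and later $M_t$ exceeds $2^k$ at some $\tau\prec\sigma$ with $M_t(\tau)$ barely above $2^k$). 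Your proposed repair --- stopping the computable partial sums $Q_n=\sum_{i\leq n}P_i$ --- fails for the same reason: the stopped values of $Q_n$ are not monotone in $n$, so they do not assemble into a \lce approximation of $L_k$. The standard argument (the one in \citep[\S 6.3.1]{rodenisbook} that the paper relativizes to online supermartingales) avoids this by never stopping $M$ itself: it works with the c.e.\ sets $C_k=\{\sigma:\exists s\ M_s(\sigma)>2^k\}$, applies Kolmogorov's inequality to bound their measure, and builds $N_k$ from the conditional measures of $[C_k]$ --- conditioned, in the even-parity case, only on the odd-position (betting) bits, which is precisely what the online relativization delivers and what keeps $N_k$ even-parity.

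Second, your reduction of the supermartingale case to the martingale case is wrong as stated. The compensator $C(\sigma)=\tfrac12\sum_{\tau\prec\sigma,\,|\tau|\text{ odd}}\bigl(2M(\tau)-M(\tau\ast 0)-M(\tau\ast 1)\bigr)$ is a difference of \lce quantities, hence only d.c.e., so $M^{*}=M+C$ has no reason to be \lce; this is the same phenomenon the paper flags in \S\ref{SIqhiJbskS} for the wagers of \lce martingales. Worse, the conclusion you want from this step --- a \lce even-parity martingale dominating a given \lce even-parity supermartingale --- cannot hold in general: the paper notes that the optimal \lce supermartingale is not bounded above by any \lce martingale, and by Lemma \ref{WAcx8aVmQB} the same diagonalization obstructs the even-parity case. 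The open-set construction above handles supermartingales directly (Kolmogorov's inequality needs only the supermartingale inequality), which is why the paper's citation covers that case without any domination step.
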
\begin{proof}
A direct relativization of \citep[Lemma 2.5]{barmpalias2020monotonous} shows that
\eqref{6hgXrGpsnt} holds for online martingales. Then for (a) it suffices to
apply this result to an online \lce martingale $N$ such that 
$M(\sigma\oplus \tau)=N(\tau\mid \sigma)$. In particular, let $(N_i)$ be a computable family of
online \lce martingales such that
$N(\tau\mid \sigma)=\sum_i N_i(\tau\mid \sigma)$.
Then $M(\sigma\oplus \tau)=\sum_i N'_i(\sigma\oplus \tau)$ where
$N'_i(\sigma\oplus \tau):=N_i(\tau\mid \sigma)$ are uniformly computable and even-parity.

A direct relativization of the argument in \citep[\S 6.3.1]{rodenisbook} 
for \lce supermartingales shows that it also holds for online supermartingales.
Hence for (b) it suffices to apply this fact to an online \lce supermartingale $N$ such that
$M(\sigma\oplus \tau)=N(\tau\mid \sigma)$.
The case of odd-parity martingales is symmetric. 
\end{proof}
Surprisingly, the closely related single-sided strategies can be more complex and do not necessarily
satisfy the decomposition of (i) of Lemma \ref{ttrAuakcfr}. The reason for this is that
the wager $\abs{M(\sigma)-M(\sigma\ast 0)}$ of a \lce  martingale is not necessarily \lce while
the wagers of an effective mixture of computable single-sided strategies are. In this case the closest analogue 
is that given $i<2$:
\begin{equation*}
\parbox{13cm}{for every strictly $i$-sided \lce martingale $M$, 
there exists a uniformly computable sequence $(N_i)$ of martingales such that 
the partial sums $S_n=\sum_{i<n} N_i$ are strictly $i$-sided  and converge to $M$. 
\citep[Lemma 2.7]{barmpalias2020monotonous}}
\end{equation*}
where {\em strictly $i$-sided} means that
$M(\sigma i)> M(\sigma \ast (1-i))$ for all $\sigma$.
The point here is that if we view $M$ as an enumeration of bets, we only know that the aggregate bet at any 
initial segment of outcomes is single-sided, while individual sub-bets in the enumeration could favor either outcome 0,1.
This issue was discussed by \citet[\S 2.2]{barmpalias2020monotonous} where these
sub-bets were called {\em intermediate}. For a decomposition into single-sided strategies we need the 
stronger hypothesis that $M$ is strongly l.c.e., which means that the wagers of $M$ are also l.c.e.:
\begin{equation*}
\parbox{13cm}{for every strongly \lce  strictly $i$-sided martingale $M$, 
there exists a uniformly computable sequence $(N_i)$ of $i$-sided martingales such that 
the partial sums $S_n=\sum_{i<n} N_i$ 
converge to $M$. \citep[Lemma 2.8]{barmpalias2020monotonous}}
\end{equation*}
This is the reason that
\citep[Theorem 3.3]{barmpalias2020monotonous}
was restricted to strongly \lce martingales: 
there exists $x$ with $\dim (x)=1/2$ and each single-sided strongly \lce martingale fails on $x$.
Constructing a non-random $x$ such that all single-sided \lce (super)martingales fail on $x$ is considerably
more challenging and requires different methods, see \citep{sided21}. Moreover we do not know
how low the effective Hausdorff dimension of such a real can be (other than it must be at least 1/2), and indeed
whether it can be less than 1.

An even-parity \lce supermartingale $M$ is {\em optimal} if for 
any even-parity \lce supermartingale we have $M'=\bigo{M}$;
a similar definition applies for odd-parity \lce supermartingales.
Let $\lambda$ denote the empty string.
\begin{lem}\label{WAcx8aVmQB}
There exist optimal even-parity and odd-parity \lce supermartingales $M,N$ respectively such that 
$M(\lambda)<1/2, N(\lambda)<1/2$.
\end{lem}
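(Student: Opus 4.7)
The plan is to construct $M$ as a universal element of the class of normalized even-parity \lce supermartingales (where ``normalized'' means $M'(\lambda)\leq 1$) by the usual weighted-sum recipe, and then to observe that the weights can be chosen so that $M(\lambda) < 1/2$; the construction of $N$ is entirely symmetric (replace ``even'' by ``odd'' throughout, shifting the parity of relevant string lengths by one).

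First I would build a uniformly c.e.\ sequence $(M_i)_{i\geq 0}$ of normalized even-parity \lce supermartingales that contains every normalized even-parity \lce supermartingale. Enumerate all c.e.\ sets $W_i\subseteq 2^{<\omega}\times \mathbb{Q}_{>0}$, and interpret each pair $(\sigma,q)\in W_i$ as a request to add $q$ to the current approximation of $M_i$ at $\sigma$. Process the requests of $W_i$ in enumeration order, accepting a request only if, after performing the minimal set of propagations forced by (a) the parity identities $M_{i,s}(\tau\ast 0)=M_{i,s}(\tau\ast 1)=M_{i,s}(\tau)$ for even $|\tau|$, and (b) the supermartingale inequality $(M_{i,s}(\tau\ast 0)+M_{i,s}(\tau\ast 1))/2\leq M_{i,s}(\tau)$ for odd $|\tau|$, the cap $M_{i,s}(\lambda)\leq 1$ is still met; otherwise reject. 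Any normalized even-parity \lce supermartingale $M'$ is then realised as some $M_i$, namely the one whose $W_i$ enumerates $M'$'s own value-increments, since each such request passes all three checks by construction.

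Next, I would aggregate by setting
\[
M(\sigma) := \sum_{i\geq 0} 2^{-i-3}\, M_i(\sigma).
\]
Because each $M_i$ is \lce uniformly in $i$, the series defines an \lce function; since non-negative linear combinations preserve both the parity identities and the supermartingale inequality, $M$ is an even-parity \lce supermartingale. The weights were chosen so that $M(\lambda)\leq \sum_{i\geq 0} 2^{-i-3} = 1/4 < 1/2$, giving the required strict bound. For optimality, take any even-parity \lce supermartingale $M'$: if $M'(\lambda)=0$ then $M'\equiv 0$ by non-negativity and the supermartingale inequality, and there is nothing to prove; otherwise $M'/M'(\lambda)$ is normalized and equals some $M_i$ by the previous step, so $M\geq 2^{-i-3}M'/M'(\lambda)$ pointwise and hence $M'=\bigo{M}$.

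The one technical subtlety is the sanitization in the first step: accepting an increment at an odd-length node forces an increment at its even-length parent (to maintain the supermartingale inequality), which in turn forces an equal increment at the parent's sibling (to maintain parity); one must verify that this cascade is finite, effective, and can be checked consistently against the cap $M_{i,s}(\lambda)\leq 1$. This is a routine adaptation of the standard construction of a universal \lce supermartingale to the even-parity setting, and is the only piece of the argument requiring bookkeeping care; the weighted aggregation and the optimality check are then immediate consequences, and the bound $M(\lambda)<1/2$ is entirely a matter of choosing weights that sum to less than $1/2$.
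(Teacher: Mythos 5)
Your proof takes essentially the same route as the paper's: the paper simply asserts that a universal enumeration $(M_i)$ of all even-parity \lce supermartingales with $M_i(\lambda)\leq 1$ exists and forms a weighted sum, while you additionally sketch how to build that enumeration. Your choice of weights $2^{-i-3}$ in fact secures the bound $M(\lambda)<1/2$ more explicitly than the paper's displayed sum $\sum_i 2^{-i}M_i$ does, so this part is fine. The one place where your sketch is slightly too quick is the claim that for the index $i$ whose $W_i$ enumerates the raw value-increments of a given normalized $M'$, ``each such request passes all three checks by construction'': if the increments arrive in an arbitrary order, the forced upward propagation (bumping a parent to restore the supermartingale inequality, then its sibling to restore parity) can overshoot the true values of $M'$ at interior nodes and even at $\lambda$, so the sanitized $M_i$ need not equal, or even be dominated-by-a-constant-times, $M'$. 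The standard fix is to choose the witness $W_i$ to enumerate the increments of a nondecreasing computable approximation of $M'$ by even-parity supermartingales (which exists, in the spirit of Lemma~\ref{ttrAuakcfr}(a)); then no propagation is ever triggered and the realization claim holds. With that one-sentence repair your argument is complete and matches the paper's.
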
\begin{proof}
The weighted sum of two even-parity \lce supermartingales is an even-parity \lce supermartingale.
Moreover there exists a universal enumeration $(M_i)$ of all \lce even-parity supermartingales $M$ with $M(\lambda)\leq 1$.
Hence $\sum_i 2^{-i}\cdot M_i$ is an optimal \lce even-parity supermartingale. A similar argument applies for the case
of odd-parity \lce supermartingales.
\end{proof}

\subsection{Speed of capital-gain and  fractal dimensions}\label{Sp9icqFzvI}
The martingale approach to algorithmic information theory was introduced by \citet{Schnorr:71, Schnorr:75}
who also showed some interest in the rate of success of (super)martingales $M$, and in particular the classes
\[
S_h(M)=\left\{x\ |\ \limsup_n \frac{M(x\restr_n)}{h(n)}=\infty\right\}
\hspace{0.3cm}\textrm{and}\hspace{0.3cm}
\hat S_h(M)=\left\{x\ |\ \liminf_n \frac{M(x\restr_n)}{h(n)}=\infty\right\}
\]
where $h:\Nat\to\Nat$ is a computable non-decreasing function.
 \citet{Lutz:00,Lutz:03} showed that the Hausdorff dimension of a class of reals can be characterized
by the exponential `success rates' of \lce supermartingales, and in that light defined the 
effective Hausdorff dimension $\dim (x) $ of a real $x$ as the infimum of the  $s\in (0,1)$ such that $x\in S_{h}(M)$ for some
\lce supermartingale $M$, where $h(n)=2^{(1-s)n}$.
 \citet{Mayordomo:02} showed that
\begin{equation}\label{kleTr2ZET}
\dim (x)=\liminf_n \frac{K(x\restr_n)}{n}
\end{equation}
where $K$ denotes the  \pf Kolmogorov complexity.
\ml random reals have effective dimension 1, 
but the converse does not hold. Moreover there
are computably random reals (no computable martingale succeeds on them) of effective dimension 0. 

\citet{Athreya02} showed that the {\em packing dimension} 
of \citet{Tricotpack, Sullivanpack}
can be effectivized and characterized in a similar way:  
\[
\Dim(x):=\inf\sqbrad{s\in (0,1)}{x\in \hat S_{h}(M)}=\limsup_n \frac{K(x\restr_n)}{n}
\]
where $h(n)=2^{(1-s)n}$ and the infimum is taken over all \lce supermartingales $M$.
We call $\Dim(x)$ the {\em effective packing dimension} of $x$.
Note that $\dim(x)\leq \Dim(x)$ for all $x$.
\begin{lem}\label{l33UQ7IjSb}
Suppose that $(V_i)$ is uniformly c.e., 
$V_i\subseteq 2^{2i}$ and for each $\sigma\in V_i$ at most three of the four 2-bit extensions
of $\sigma$ belong to $V_{i+1}$. Then each $x$ which has a prefix in each $V_i$ has $\Dim (x)\leq \log \sqrt 3$.
\end{lem}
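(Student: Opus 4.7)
The plan is to use the Mayordomo characterization $\Dim(x)=\limsup_n K(x\restr_n)/n$ stated immediately before the lemma, and to prove the Kolmogorov-complexity bound $K(x\restr_{2i})\leq i\log 3+O(\log i)$ for every $i$.

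The first step is to replace $V_i$ with the uniformly c.e.\ subset $V'_i$ that retains only those strings lying on a full chain through $V_0,V_1,\dots,V_i$; formally, $V'_0:=V_0$ and $V'_{i+1}:=\{\tau\in V_{i+1}\,:\,\tau\restr_{2i}\in V'_i\}$. Since by hypothesis each $\sigma\in V'_i\subseteq V_i$ has at most three 2-bit extensions in $V_{i+1}$, it has at most three in $V'_{i+1}$, and an easy induction yields $|V'_i|\leq 3^i$. Any $x$ with a prefix in each $V_i$ satisfies $x\restr_{2i}\in V_i$ because $V_i\subseteq 2^{2i}$, and an induction on $i$ then shows $x\restr_{2i}\in V'_i$ as well.

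Once $V'_i$ is in hand, I can describe $x\restr_{2i}$ from a pair $(i,j)$, where $j$ is the position of $x\restr_{2i}$ in a uniform enumeration of $V'_i$. A prefix-free encoding of $i$ costs $O(\log i)$ bits and the index $j\leq 3^i$ needs at most $i\log 3+O(1)$ bits, so $K(x\restr_{2i})\leq i\log 3+O(\log i)$. For arbitrary $n$, writing $n\in\{2i,2i+1\}$, we have $K(x\restr_n)\leq K(x\restr_{2(i+1)})+O(1)$, since a program for the longer prefix can be turned into one for the shorter by a constant-size wrapper that drops trailing bits. Dividing by $n$ and passing to $\limsup$ yields $\Dim(x)\leq \log\sqrt 3$. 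The only subtle point is the need to pass from $V_i$ to $V'_i$, because a string in $V_{i+1}$ need not extend any string in $V_i$, and the branching bound must be propagated along full chains in order to deliver $3^i$ rather than a potentially much larger bound on the number of candidate prefixes.
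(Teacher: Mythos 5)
Your proof is correct, but it takes a genuinely different route from the paper's. The paper works directly with the supermartingale characterization of $\Dim$ recalled in \S\ref{Sp9icqFzvI}: it defines a \lce supermartingale $M$ with $M(\lambda)=1$ that multiplies its capital by $4/3$ on each $2$-bit extension landing in $V_{i+1}$ and drops to $0$ otherwise --- the hypothesis that at most three of the four extensions survive is exactly what makes $M$ a supermartingale --- so that $M(x\restr_{2n})\geq (4/3)^n$ and hence $\liminf_n M(x\restr_n)/2^{(1-s)n}=\infty$ for every $s>\log\sqrt3$. You instead invoke the identity $\Dim(x)=\limsup_n K(x\restr_n)/n$ and count: pruning to the full chains $V'_i$ gives $|V'_i|\leq 3^i$, whence $K(x\restr_{2i})\leq i\log 3+O(\log i)$. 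Both arguments are sound and of comparable length. Your pruning step is genuinely necessary for the counting version (a string of $V_{i+1}$ need not extend one of $V_i$, so $|V_{i+1}|$ itself is uncontrolled), and you correctly flag this; the paper's martingale sidesteps the issue because capital is only propagated along chains, with everything off-chain receiving value $0$. The martingale route has the minor advantage of not relying on the Kolmogorov-complexity characterization of packing dimension, while yours makes the combinatorial content explicit ($3^i$ candidate prefixes at level $i$) and is arguably more elementary. One small point to tidy: for prefix-freeness you should fix the encoding so that the length of the code for the index $j$ is computable from $i$ (say exactly $\lceil i\log 3\rceil+1$ bits following a self-delimiting code for $i$); this is implicit in your sketch but it is the one place where the $O(\log i)$ overhead actually has to be justified.
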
\begin{proof}
By the hypothesis, for each $\sigma\in V_i$ we have $\mu_{\sigma}(V_{i+1})\leq 3/4$.

Define $M$ by $M(\lambda)=1$ and for each $i$, $\sigma\in V_i$, $\sigma'\succ\sigma$ with $|\sigma'|=|\sigma|+2$ let
\[
M(\sigma')=
\left\{\begin{array}{cl}
(4/3)\cdot M(\sigma)&\textrm{if $\sigma'\in V_{i+1}$}\\
0&\textrm{if $\sigma'\not\in V_{i+1}$}
\end{array}\right\}.
\]
%\begin{itemize}
%\item $M(\sigma')=(4/3)\cdot M(\sigma)$ if $\sigma'\in V_{i+1}$
%\item $M(\sigma')=0$ if $\sigma'\not\in V_{i+1}$.
%\end{itemize}
% 
Clearly $M$ is \lce on the strings of even length where it is defined, and
\[
4M(\sigma)\geq M(\sigma\ast 00)+M(\sigma\ast 01)+M(\sigma\ast 10)+M(\sigma\ast 11)
\]
so it is uniquely extendible to a supermartingale on $\twomel$.
By the definition of $M$ it follows that 
for each $n$ and $\sigma\in 2^{2n}$ such that $\forall i\leq n\ \sigma\restr_{2i}\in V_i$
we have $M(\sigma)\geq (4/3)^n$ so
\[
\liminf_n \frac{M(x\restr_n)}{(4/3)^{n/2}}>0.
\]
For each $s>\log \sqrt 3$ we have $2^{(1-s)n}<(4/3)^{n/2}$ hence
\[
s>\log \sqrt 3
\hspace{0.3cm}\Rightarrow\hspace{0.3cm}
\liminf_n \frac{M(x\restr_n)}{2^{(1-s)n}}=\infty
\]
which shows that $\Dim (x)\geq \log \sqrt 3$, by the definition of effective packing dimension.
\end{proof}
For  further background on algorithmic dimension see \citep[Chapter 13]{rodenisbook}.

\subsection{The van Lambalgen theorem}
The case of single-parity betting is superficially relevant to the van Lambalgen theorem, 
so we briefly review the interesting ways that the latter fails under computability restrictions.

Recall that  $z$ is random with respect to the class $\MM$ of  supermartingales if
$M(z\restr_n)=\bigon$ for each $M\in\MM$. If $\MM$  contains somewhat effective supermartingales that
can be relativized to an oracle $y$, we may consider the class
$\MM^{y}$ of the members of $\MM$ relativized to oracle $y$ and say that
$z$ is $y$-random with respect to $\MM$ if it is random with respect to $\MM^{y}$.
We say that $x,y$ are
\begin{itemize}
\item  {\em mutually random} with respect to $\MM$ if
$x$ is $y$-random with respect to $\MM$ and $y$ is $x$-random with respect to $\MM$.
\item {\em weakly mutually random} with respect to $\MM$ if
$x$ is random with respect to $\MM$ and $y$ is $x$-random with respect to $\MM$; or
$x$ is $y$-random with respect to $\MM$ and $y$ is random with respect to $\MM$.
\end{itemize}
The van Lambalgen theorem says that the following are equivalent:
\begin{enumerate}[(i)]
\item  $x\oplus y$  is random with respect to the class $\MM$ of  \lce supermartingales;
\item $x,y$ are mutually random with respect to the class $\MM$ of  \lce supermartingales;
\item $x,y$ are weakly mutually random with respect to the class $\MM$ of  \lce supermartingales.
\end{enumerate}
Restriction to the computable strategies has the following effects on this equivalence.
We refer to randomness with respect to the class of computable supermartingales as {\em computable randomness};
mutual and weakly mutual computable randomness is defined analogously.
\begin{itemize}
\item there are $x,y$ such that $x\oplus y$ is computably random but $x,y$ are not mutually computably random;
hence (i)$\to$ (ii) fails for computable strategies; \citep{YuvanLamb}
\item if $x, y$ are mutually is computably random then $x\oplus y$ is computably random;  
hence (ii)$\to$ (i) holds for computable strategies; \citep{Bauwens20uvan} 
\item there are $x,y$ such that $x\oplus y$ is not computably random but $x,y$ are weakly mutually  computably random;
hence (iii)$\to$ (i) fails for computable strategies; \citep{Bauwens20uvan}.
\end{itemize}
\citet{miyaberute} studied van Lambalgen's theorem in the context of computable strategies and
a notion of {\em uniform relativization}, and \citet{Satyadevresourvanlan} considered the case of time-bounded
computable strategies.

Both the single-parity strategies we considered earlier and the relativized strategies in the 
van Lambalgen theorem use the information in the even bits of the outcome in order to bet on the odd bits (or vice-versa).
However the access to the even bits in the van Lambalgen theorem is unrestricted, while in the case of
single-parity strategies it is {\em online} in the sense that when placing a bet on round $t$ the player only has
access on the even bits $i<t$ (as well as the odd bits $<t$ where he was allowed to bet). 
This type of online advice was explored by \citet{ChernovSVV08} and \citet{stacsBauwens14},
and can be equivalently formulated in terms of {\em online program-size complexity} or {\em online semimeasures}.

\section{Integer-valued strategies and single-parity irreducibility}\label{9tswL9nmI}
When the wagers of a strategy are restricted to a specific set of values, their success may be limited.
Such restrictions were studied by
\citet{TeutChalcraft}, \citet{Peretzagainst}, \citet{Peretzwager}
while \citet{BienvenuST10}, \citet{Teu14agCWGnp}, \citet{JCSSbarmp15} focused on the specific case of integer wagers.
\citet{granuma} studied the case where the permissible granularity for the wagers decreases at given rates. 

We show that in the case of integer-valued supermartingales the reducibility to
single-sided or single-parity strategies no longer holds, even in the case of computable strategies.

\begin{defi}\label{gN4otBmdQj}
We define two computable 
integer-valued martingales $N, D$. 
Martingale $N$ only bets on outcome 1 and is given by:
\begin{itemize}
\item $N(\lambda)=5$ and if $N(\sigma)=0$ then $N(\sigma 0)=N(\sigma 1)=0$
\item otherwise,  $N(\sigma 0)=N(\sigma)-1$ and $N(\sigma\ast 1)=N(\sigma)+1$.
\end{itemize}
Martingale $D$ bets the minimal wager 1 on the two outcomes alternately and is given by:
\begin{itemize}
\item $D(\lambda)=5$ and if $D(\sigma)=0$ then $D(\sigma 0)=D(\sigma 1)=0$
\item if $D(\sigma)>0$ and $|\sigma|$ is even then $D(\sigma 0)=D(\sigma)+1$ and $D(\sigma 1)=D(\sigma)-1$ 
\item if $D(\sigma)>0$ and $|\sigma|$ is odd then $D(\sigma 0)=D(\sigma)-1$ and $D(\sigma 1)=D(\sigma)+1$. 
\end{itemize}
\end{defi}
\begin{lem}\label{pxbnVAGiWD}
If  $N(\rho)>2, D(\rho)>2$ then 
for each integer-valued supermartingale $M$
\begin{enumerate}[(i)]
\item if $M$ is single-parity,  
there exists $\tau\succ\rho$ such that $N(\tau)>c$ and  $\forall \tau' \succ\tau,\ M(\tau')=M(\tau)$.
\item if $M$ is single-sided  
there exists $\tau\succ\rho$ such that $D(\tau)>c$ and  $\forall \tau' \succ\tau,\ M(\tau')=M(\tau)$.
\end{enumerate}
\end{lem}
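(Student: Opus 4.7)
The approach combines a minimum argument with a pairing-style navigation. Key observation: for any non-negative integer-valued supermartingale $M$ on the subtree above $\rho$, if $m^{*}=\min_{\sigma \succeq \rho} M(\sigma)$ is attained at some $\tau^{*}$, then $M$ is identically $m^{*}$ on the entire subtree above $\tau^{*}$. Indeed, any bet at $\tau^{*}$ would require an integer wager of at least $1$, forcing one child below $m^{*}$ and contradicting minimality; descending inductively shows no bet can occur anywhere in $\tau^{*}$'s subtree. Thus a flat subtree always exists, and the remaining task is to navigate from $\rho$ to some $\tau$ in the flat set while keeping $N$ (resp.\ $D$) positive, and then extend by enough bits to push it past $c$.

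For clause (i), WLOG assume $M$ is even-parity. I would extend $\rho$ by a pairing strategy: at each pair of consecutive bits (even-position, odd-position), place $1$ at the even-position bit (where $M$ never bets by even-parity, so $N$ gains $1$), and at the odd-position bit pick against $M$'s prediction if $M$ bets there, otherwise $1$. Along this path $N$ is non-decreasing and $M$ non-increasing; integer-valuedness forces $M$ to stabilize at some $m^{*}_{\mathrm{path}}$ within $M(\rho)$ pair-steps. If $m^{*}_{\mathrm{path}}=0$ the current node is $M$-flat and extending by $1$s gives $N(\tau)>c$. Otherwise, if the reached node is not yet in the flat set, some deviation branch must still carry a bet; I take that deviation at a cost of at most $2$ units of $N$, forcing $M$ down by at least $1$, and iterate. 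By the minimum argument this terminates within $M(\rho)$ deviations at a node $\tau^{*}$ with $M$ flat above, and a tail of $1$s then pushes $N$ past $c$. Clause (ii) is handled analogously using $D$ in place of $N$: at even-length bits pick $0$ (which grows $D$ and damages $M$ if it bets), at odd-length bits pick $1$ unless $M$ bets there (in which case pick $0$, paying one unit of $D$ per conflict); iterate until a flat subtree is reached, then grow $D$ along the $01$-alternating extension.

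The main obstacle is to keep $N$ (resp.\ $D$) strictly positive throughout the navigation, since each deviation can reduce it by up to $2$ units and it becomes absorbed at $0$ once there. I would address this by interleaving additional pairing steps between deviations: once $M$ stabilizes along the current pairing path, the companion strategy degenerates to appending $1$s (resp.\ alternating $01$), so $N$ (resp.\ $D$) grows without bound, and before each deviation I can pair enough to bring the companion value above $3$, keeping it at least $1$ after the worst-case dip. This bookkeeping, combined with the minimum argument, produces the desired $\tau$ in both clauses.
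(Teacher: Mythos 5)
Your opening observation is correct and clean: a non-negative integer-valued supermartingale attains its minimum over any cone, and it is constant on the cone above any minimizer; and your pairing phase (forced $1$s where $M$ cannot bet, the losing outcome where it can) is essentially the path that the paper's own proof follows, along which $N$ is non-decreasing and $M$ is non-increasing and must stabilize. The gap is in the deviation step. You assert that when the current node is not yet in the flat set, ``some deviation branch must still carry a bet'' and that you can take it ``at a cost of at most $2$ units of $N$''. The nearest node above the current position at which $M$ actually bets need not be an immediate sibling: it can lie arbitrarily deep in the cone, and reaching it can require a long run of $0$s. Since $N$ is absorbed at $0$, pre-pumping cannot protect you: the detour may be forced to cross the barrier $N=0$, after which $N$ never recovers, so the true cost is not $2$ but all of $N$. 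Concretely, let $B$ be the antichain of strings $\eta$ of odd length with $N(\eta)=0$ and no proper prefix in $B$, and let $M$ be the even-parity integer-valued martingale with $M\equiv 1$ on all strings having no proper prefix in $B$, with $M(\eta 0)=0$, $M(\eta 1)=2$ for $\eta\in B$, and constant thereafter. Every $\tau$ with $N(\tau)\geq 1$ has an extension in $B$ (follow $0$s until $N$ dies), hence $M$ is constant above no such $\tau$: the entire flat set is hidden behind the $N=0$ barrier, and no navigation scheme can terminate at a node with $N(\tau)>c$ and $M$ constant on the whole cone above it.

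This also shows that the difficulty is not merely one of bookkeeping: full-cone constancy, which is what the displayed conclusion $\forall\tau'\succ\tau\ M(\tau')=M(\tau)$ demands and what your argument aims for, is strictly stronger than what the pairing phase delivers, and the example above shows it can simply fail. The paper's proof stops at the pairing phase and passes directly from ``integer-valued and non-increasing along the chosen extensions'' to constancy on the cone, which is the same leap you are trying to justify with deviations. What is actually provable, and what the application in the integer-valued theorem needs, is weaker: fix once and for all the subtree $S$ of extensions of $\rho$ in which the non-betting positions of $M$ carry the bit $1$ (so $N\geq 1$ throughout $S$ and the all-$1$s tail is available to push $N$ past $c$), take the minimizer of $M$ over $S$, and conclude that $M$ is constant on the $S$-extensions of that node; equivalently, $M$ places at most $M(\rho)$ losing bets against you inside $S$, so $M$ is bounded along the real being built. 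I would prove and use that restricted statement rather than attempt the full-cone version.
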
\begin{proof}
For (i), without loss of generality, assume that $M$ is an even-parity supermartingale and for each odd position $i$ after $|\rho|$
let $\tau(i)=1$. Then for any choice of values in the even positions of $\tau$ after $|\rho|$ we have $N(\tau)\geq 1$.
Since $M(\tau\ast j)=M(\tau)$ whenever $|\tau|$ is even and since $M$ is integer-valued, we may fix a finite segment of the even positions
of $\tau$ after $|\rho|$ so that $\forall \tau' \succ\tau,\ M(\tau')=M(\tau)$. The conclusion then follows if we replace the constructed 
$\tau$ with $\tau\ast 1^c$.

For (ii), without loss of generality, assume that $M$ is  0-sided.
Recall that $D$ bets on 0 at even positions and on 1 at odd positions. 
If on each odd position $i$ after $|\rho|$ we 
let $\tau(i)=1$, then for any choice of values in the even positions of $\tau$ after $|\rho|$ we have $D(\tau)\geq 1$.
Since $M(\tau\ast 1)\leq M(\tau)$ for all $\tau$ and since $M$ is integer-valued, we may fix a finite segment of the even positions
of $\tau$ after $|\rho|$ so that $\forall \tau' \succ\tau,\ M(\tau')=M(\tau)$ and $|\tau|$ is even. 
The conclusion then follows if we replace the constructed 
$\tau$ with $\tau\ast (01)^c$.
\end{proof}
\begin{thm}[Integer-valued irreducibility]\label{5C9MB4KgaY}\ 
\begin{enumerate}[(a)]
\item There exists $z$ such that
some computable integer-valued martingale succeeds on $z$ but 
each computable integer-valued single-parity supermartingale fails on $z$.
\item There exists $z$ such that
some computable integer-valued martingale succeeds on $z$ but 
each computable integer-valued single-sided supermartingale fails on $z$.
\end{enumerate}
\end{thm}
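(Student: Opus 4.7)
The plan is to use the computable integer-valued martingale $N$ from Definition \ref{gN4otBmdQj} as a universal witness for part (a), and $D$ as a universal witness for part (b), building the real $z$ by a stagewise diagonalization in which Lemma \ref{pxbnVAGiWD} is invoked once per stage to neutralize one more computable integer-valued single-parity (resp.\ single-sided) supermartingale.

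For part (a), I would fix any enumeration $(M_e)_{e\in\Nat}$ of all computable integer-valued single-parity supermartingales; the class is countable, so such a listing exists even though it need not be uniformly computable, and the construction itself will be non-effective. I would then build an increasing chain of prefixes $\rho_0\prec \rho_1\prec\cdots$ maintaining two invariants: (I1) $N(\rho_e)>e+2$ (together with $D(\rho_e)>2$, so Lemma \ref{pxbnVAGiWD} applies), and (I2) $M_e$ is constant on every extension of $\rho_{e+1}$. The base case $\rho_0=\lambda$ is valid since $N(\lambda)=D(\lambda)=5>2$. At stage $e+1$, given $\rho_e$, I apply clause (i) of Lemma \ref{pxbnVAGiWD} to $M=M_e$, $\rho=\rho_e$, and $c=e+3$, obtaining $\tau\succ\rho_e$ with $N(\tau)>e+3$ and $M_e(\tau')=M_e(\tau)$ for all $\tau'\succ\tau$; set $\rho_{e+1}:=\tau$, padding with a short pattern if necessary to also keep $D(\rho_{e+1})>2$. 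Taking $z=\bigcup_e \rho_e$, invariant (I1) gives $\limsup_n N(z\restr_n)=\infty$, so the computable integer-valued martingale $N$ succeeds on $z$; invariant (I2) gives $M_e(z\restr_n)=M_e(\rho_{e+1})$ for all $n\geq |\rho_{e+1}|$, so every $M_e$ fails on $z$.

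For part (b), I would repeat the same blueprint verbatim, replacing $N$ by $D$, clause (i) of Lemma \ref{pxbnVAGiWD} by clause (ii), and the enumeration of single-parity supermartingales by one of computable integer-valued single-sided supermartingales. No other changes are needed.

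The main non-routine input is already packaged inside Lemma \ref{pxbnVAGiWD}: integrality of the wagers prevents a single-parity or single-sided supermartingale from making arbitrarily small hedging bets to track $N$ or $D$, so once $N$ (or $D$) has been pushed above the current value of the supermartingale, some finite extension forces the supermartingale into a permanently constant regime. With this lemma in hand, the construction carries no interacting requirements and no priority conflicts; it is a plain sequential diagonalization, and verifying the two conclusions at $z=\bigcup_e\rho_e$ is immediate from the invariants.
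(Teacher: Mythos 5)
Your proposal is correct and follows essentially the same route as the paper: the paper's proof is exactly this finite-extension diagonalization, listing all computable integer-valued single-parity (resp.\ single-sided) supermartingales and applying clause (i) (resp.\ (ii)) of Lemma \ref{pxbnVAGiWD} once per requirement while driving $N$ (resp.\ $D$) from Definition \ref{gN4otBmdQj} to infinity. The only detail worth a remark is your re-establishment of the hypothesis $D(\rho_{e+1})>2$ by padding, which is harmless but also avoidable, since clause (i) of Lemma \ref{pxbnVAGiWD} only uses the assumption $N(\rho)>2$ (and clause (ii) only $D(\rho)>2$), so for part (a) the $D$-invariant can simply be dropped.
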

\begin{proof}
Recall the martingales $N,D$ of Definition \ref{gN4otBmdQj}.
For (a), let $(M_i)$ be a list of all computable integer-valued single-parity supermartingales. 
By (i) of Lemma \ref{pxbnVAGiWD} we can define $z$ by initial segments so that
$M_i(z\restr_n)=\bigon$ for all $i$  and  $\lim_n N(z\restr_n)=\infty$. This proves (a).
For (b) let $(M'_i)$ be a list of all computable integer-valued single-sided supermartingales.
By (ii) of Lemma \ref{pxbnVAGiWD} we can define $z$ by initial segments so that
$M'_i(z\restr_n)=\bigon$ for all $i$  and  $\lim_n D(z\restr_n)=\infty$. This proves (b).
\end{proof}
Since  martingale $N$ only bets on 1, the definition of $z$ in Theorem \ref{5C9MB4KgaY}
by initial segments can be modified by interpolating arbitrarily long blocks `010101010\dots'
without losing the accumulated capital of $N$ along $z$. By choosing these segments sufficiently long 
we can ensure that the Kolmogorov complexity of $z$ drops sufficiently at the end of those segments, so that
\[
\liminf\frac{K(z\restr_n)}{n}=0
\]
so, by the characterizations of \S\ref{Sp9icqFzvI}, $\dim(z)=0$.

\section{Packing dimension and parity-betting: proof of Theorem \ref{Px6x4FnWQw}}\label{TCPLGlg6St}
We show that there are highly  predictable reals, where we can predict each consecutive pair
of bits in 3-out-of-4 guesses, yet 
no single-parity \lce supermartingale succeeds on them.
\begin{thm}\label{yV79J5ZGp1}
There exists $x$ and a \ml test $(V_i)$ such that $V_i\subseteq 2^{2i}$, $x\restr_{2i}\in V_i$, $\mu_{\sigma}(V_i)\leq 3/4$
for each $\sigma\in V_{i-1}$, and 
no single-parity \lce supermartingale succeeds on $x$.
\end{thm}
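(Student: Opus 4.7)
The plan is to construct $x$ and $(V_i)$ simultaneously via a stage-wise construction that uses the optimal single-parity supermartingales. By Lemma~\ref{WAcx8aVmQB} let $M$ and $N$ be optimal even- and odd-parity \lce supermartingales with $M(\lambda)+N(\lambda)<1$; set $T:=M+N$, an \lce supermartingale with $T(\lambda)<1$. The key combinatorial inequality is that for every $\sigma$ of even length,
\[
\sum_{\tau\in 2^2} T(\sigma\tau) \;\leq\; 4\,T(\sigma),
\]
obtained by combining the supermartingale inequality for $M$ at positions of odd length after $\sigma$ with that of $N$ at even positions. Hence excluding the largest of the four extensions leaves three $T$-values summing to at most $3T(\sigma)$, whose minimum is at most $T(\sigma)$. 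If we could follow this prescription at every $\sigma$ along $x$, then $T(x\restr_n)$ would be bounded by $T(\lambda)<1$, so by optimality of $M,N$ no single-parity \lce supermartingale would succeed on $x$.

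We build $V_i$ and a candidate $y_s$ for $x$ in stages. At each stage $s$ we maintain a current prefix $y_s\in 2^{2\ell_s}$ of $x$, partial c.e.\ sets $V_i^s\subseteq V_i$, and a permanent exclusion $\tau^\star(\sigma)\in 2^2$ committed for each $\sigma$ so far visited. When $y_s$ first reaches a fresh $\sigma$, we set $\tau^\star(\sigma)$ to be the lex-first argmax of $T_s(\sigma\cdot)$ at that stage and enumerate the other three extensions $\sigma\tau$, $\tau\neq\tau^\star(\sigma)$, into $V_{|\sigma|/2+1}^s$. At stage $s+1$ we retract $y_s$ to the longest prefix whose $T_{s+1}$-value is below a fixed threshold $C$, and then extend by two bits, choosing the $\tau\neq\tau^\star(\sigma)$ that minimizes $T_{s+1}(\sigma\tau)$. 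The resulting $(V_i)$ is uniformly c.e., each committed $\sigma\in V_{i-1}$ has exactly three extensions in $V_i$ (so $\mu_\sigma(V_i)\leq 3/4$), and $x=\lim_s y_s$, once convergence is established, satisfies $x\restr_{2i}\in V_i$.

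The chief obstacle is proving that $y_s\restr_{2\ell}$ stabilizes for every $\ell$. Since $\tau^\star(\sigma)$ is a finite-stage approximation of the true argmax of $T(\sigma\cdot)$, which is itself only $\Pi^0_2$, the three kept extensions may later have $T$-value exceeding $C$, triggering repeated retractions; in the worst case the kept three have sum only bounded by $4T(\sigma)-T_s(\sigma\tau^\star(\sigma))$, giving a multiplicative growth of up to $4/3$ per wrong commitment. Convergence is established by an inductive priority argument on $\ell$: assuming $y_s\restr_{2(\ell-1)}$ has stabilized at some $\sigma$, the dynamic minimization among the three committed extensions of $\sigma$ eventually settles, and the total "wasted capital" across wrong commitments at all levels is amortized against the bounded initial capital $T(\lambda)$. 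The choice of $C$ and the tie-breaking rule for both the commitment and the subsequent minimum-selection must be coordinated so that this accounting closes; this bookkeeping is the hardest part of the proof.

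Once $y_s$ stabilizes we obtain $x=\lim_s y_s$ with $T(x\restr_n)\leq C$ for all $n$, and by the optimality of $M$ and $N$ this implies that no even- or odd-parity \lce supermartingale succeeds on $x$. Together with the measure bound and c.e.\ enumeration of $(V_i)$ built into the construction, this completes the proof.
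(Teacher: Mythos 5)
Your setup is right: take optimal even- and odd-parity \lce supermartingales $M,N$ with $M(\lambda)+N(\lambda)<1$, build a c.e.\ tree that discards one of the four two-bit extensions at each node, and argue that some path keeps $M+N$ bounded, which by optimality defeats every single-parity \lce supermartingale. Your combinatorial inequality $\sum_{\tau\in 2^2}T(\sigma\tau)\leq 4T(\sigma)$ is also correct, and excluding the \emph{true} argmax would indeed leave three values whose minimum is at most $T(\sigma)$. But the gap is exactly where you locate it, and it is not bookkeeping that can be deferred: committing to the stage-$s$ argmax is an irrevocable guess at a non-computable object, and a single wrong guess lets the adversary drive the minimum of the three kept extensions up to $\tfrac{4}{3}T(\sigma)$. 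Concretely, if $M(\lambda)=2N(\lambda)$ the pair can place final value $0$ on whichever extension you excluded and $\tfrac{4}{3}T(\lambda)$ on each of the other three (e.g.\ excluding $00$: set $M(00)=N(0)=0$, $M(01)=2M(\lambda)$, $N(1)=2N(\lambda)$, $M(10)=M(11)=M(\lambda)$), so no fixed threshold $C$ survives unboundedly many such errors unless you prove the adversary cannot sustain the required ratio of $M$ to $N$ down the tree. Your ``amortization against the bounded initial capital'' is precisely the missing theorem, and the retraction scheme does not supply it: if every kept extension of a node is eventually killed you must retreat to an earlier level, and nothing in the proposal rules out the whole tree being killed.

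The paper avoids the argmax entirely. At each node $\eta$ it unconditionally enumerates the two extensions $\eta 00$ and $\eta 10$, and only if \emph{both} are later observed to exceed the threshold $c$ does it add a third string, chosen by comparing the odd-parity components $n_0=N_s(\eta 0)$ and $n_1=N_s(\eta 1)$ at that moment ($\eta 01$ if $n_0\leq n_1$, else $\eta 11$). Lemma \ref{DUzvpITARE}, proved via the minimal-martingale decomposition of Lemmas \ref{eyEw9DESd3} and \ref{KSltdSxMM7}, shows that this choice is correct for the \emph{limit} values: the lower bounds $m_{00}+n_0\geq c$ and $m_{10}+n_1\geq c$ witnessed at stage $s$ already force $M+N\leq c$ at the selected third string. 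The decision is one-shot and provably right, so the threshold is preserved exactly at every level with no retraction and no amortization. To repair your proof you would need to replace the argmax commitment with a selection rule of this kind, i.e.\ one whose correctness is monotone in the \lce approximations rather than contingent on having guessed a limit quantity correctly.
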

By Lemma \ref{l33UQ7IjSb} we have $\Dim(x)\leq \log \sqrt 3$, hence
Theorem \ref{Px6x4FnWQw}.
\begin{lem}\label{eyEw9DESd3}
Given non-negative $m_{00}, m_{10}$  
there exists an even-parity  martingale $M_0$ on $2^{\leq 2}$ such that
\begin{equation}\label{MV2jhnMXzk}
M_0(00)=m_{00}\fand M_0(10)=m_{10} \fand M_0(\lambda)=\max\{m_{00},m_{10}\}/2.
\end{equation}
Moreover for every
even-parity  martingale $M$ on $2^{\leq 2}$ such that
$M(00)\geq m_{00}\fand M(10)\geq m_{10}$ we have $M(\tau)\geq M_0(\tau), \tau\in 2^{\leq 2}$.
\end{lem}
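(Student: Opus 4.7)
My strategy is to define $M_0$ top-down. The even-parity condition at $\lambda$ forces $M_0(0)=M_0(1)=M_0(\lambda)$, after which the one-step martingale equations at $0$ and $1$ determine
\[
M_0(01)=2M_0(\lambda)-m_{00}\qquad\text{and}\qquad M_0(11)=2M_0(\lambda)-m_{10}.
\]
Setting $M_0(\lambda):=\max\{m_{00},m_{10}\}/2$ is precisely the smallest value that keeps both of these right-hand sides non-negative, and it fixes $M_0$ uniquely on all of $2^{\leq 2}$. The remaining work is a routine check on seven values that $M_0$ is non-negative, even-parity, and satisfies the martingale equation everywhere.

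\textbf{Dominance.} For an arbitrary even-parity martingale $M$ on $2^{\leq 2}$ satisfying $M(00)\geq m_{00}$ and $M(10)\geq m_{10}$, the even-parity condition gives $M(\lambda)=M(0)=M(1)$, and the one-step martingale equations at $0$ and $1$ combined with $M(01),M(11)\geq 0$ yield
\[
2M(\lambda)\geq M(00)\geq m_{00}\qquad\text{and}\qquad 2M(\lambda)\geq M(10)\geq m_{10}.
\]
Hence $M(\lambda)\geq\max\{m_{00},m_{10}\}/2=M_0(\lambda)$, which delivers $M(\tau)\geq M_0(\tau)$ at $\tau\in\{\lambda,0,1\}$; dominance at $\tau\in\{00,10\}$ is immediate from the hypothesis, and the two residual positions $01,11$ are read off from the martingale identities and the just-established lower bound on $M(\lambda)$ at the extremal configuration of $M$ where the conditions at $00,10$ are tight.

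\textbf{Main obstacle.} There is no substantive obstacle here: the lemma is a finite linear-algebraic calculation on a tree of depth $2$. Its genuine content is the minimum-capital identity $M_0(\lambda)=\max\{m_{00},m_{10}\}/2$, which pins down the exact initial fortune an even-parity strategy must commit in order to cover two of the four depth-$2$ extensions $00,10$ of an even-length state. This quantitative lower bound is precisely the quantity that the diagonalization against single-parity supermartingales in the proof of Theorem~\ref{yV79J5ZGp1} must budget against at each stage, so the lemma plays the role of a unit-cost accounting step that will be iterated block-by-block to build the target sequence $x$.
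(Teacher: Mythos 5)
Your construction of $M_0$ (force $M_0(0)=M_0(1)=M_0(\lambda)$, solve $M_0(01)=2M_0(\lambda)-m_{00}$, $M_0(11)=2M_0(\lambda)-m_{10}$, take $M_0(\lambda)=\max\{m_{00},m_{10}\}/2$ as the least value keeping these non-negative) is exactly the paper's, and your derivation of $M(\lambda)\geq M_0(\lambda)$ from $2M(\lambda)=M(00)+M(01)\geq m_{00}$ and $2M(\lambda)=M(10)+M(11)\geq m_{10}$ is correct; together with the hypothesis this gives dominance at $\tau\in\{\lambda,0,1,00,10\}$. The gap is your last clause: you claim the ``two residual positions $01,11$ are read off from the martingale identities and the just-established lower bound on $M(\lambda)$ at the extremal configuration of $M$ where the conditions at $00,10$ are tight.'' That only handles the case $M(00)=m_{00}$, $M(10)=m_{10}$. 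In general $M(01)\geq M_0(01)$ amounts to $2\bigl(M(\lambda)-M_0(\lambda)\bigr)\geq M(00)-m_{00}$, which does not follow from $M(\lambda)\geq M_0(\lambda)$ and $M(00)\geq m_{00}$. Indeed the ``moreover'' clause is false as stated: take $m_{00}=0$, $m_{10}=2$, so $M_0(\lambda)=1$, $M_0(01)=2$, $M_0(11)=0$, and let $M$ be the even-parity martingale with $M(\lambda)=M(0)=M(1)=1$, $M(00)=M(10)=2$, $M(01)=M(11)=0$. Then $M(00)\geq m_{00}$ and $M(10)\geq m_{10}$, yet $M(01)=0<2=M_0(01)$. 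So no argument can close this step.

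For what it is worth, the paper's own proof has the identical defect: it deduces $M(\lambda)\geq\max\{m_{00},m_{10}\}$ (itself missing a factor $1/2$) and then asserts ``which in turn implies $M(\tau)\geq M_0(\tau)$, $\tau\in 2^{\leq 2}$'' without justification; the implication fails at $01$ and $11$ exactly as above. The statement that is true --- and that both your argument and the paper's actually establish --- is dominance for $\tau\in\{\lambda,0,1,00,10\}$, and this weaker form is all that the sequel needs: in Lemma \ref{KSltdSxMM7} one only needs $D_M=M-M_0$ to be an even-parity martingale with $D_M(\lambda)$, $D_M(00)$, $D_M(10)\geq 0$, and in Lemma \ref{DUzvpITARE} the key bound $D_M(01)\leq 2D_M(\lambda)$ follows from $D_M(01)+D_M(00)=2D_M(0)=2D_M(\lambda)$ together with $D_M(00)\geq 0$ alone. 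The correct repair, for you and for the paper, is to restrict the ``moreover'' clause to those five strings rather than all of $2^{\leq 2}$.
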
\begin{proof}
If $M$ is even-parity we have
\begin{equation}\label{PDnKT85wXr}
M(\lambda)=M(0)=M(1)=\frac{M(00)+M(01)}{2}=\frac{M(10)+M(11)}{2}
\end{equation}
which, along with \eqref{MV2jhnMXzk}, uniquely determines
the values of $M_0(01), M_0(11)$: 
\begin{itemize}
\item if $m_{00}\geq m_{10}$ then $M_0(11)= m_{00}- m_{10}$ and $M_0(01)=0$
\item if $m_{00}< m_{10}$ then $M_0(01)= m_{10}- m_{00}$ and $M_0(11)=0$
\end{itemize}
which shows the first clause of the claim. On the other hand, 
for any martingale $M$ with $M(00)\geq m_{00}, M(10)\geq m_{10}$,
condition \eqref{PDnKT85wXr} implies 
$M(\lambda)\geq\max\{m_{00},m_{10}\}$, which in turn implies 
$M(\tau)\geq M_0(\tau), \tau\in 2^{\leq 2}$ as required.
\end{proof}
\begin{lem}\label{KSltdSxMM7}
Given $m_{00}, m_{10}, n_0, n_1\geq 0$  
there exist unique even and odd betting martingales $M_0, N_0$ on $2^{\leq 2}$
such that $M_0(\lambda)=\max\{m_{00},m_{10}\}/2$, $M_0(00)=m_{00}, M_0(10)=m_{10}$ 
$N_0(0)=n_0, N_0(1)=n_1$ and
\begin{itemize}
\item for any even-parity martingale $M$ on $2^{\leq 2}$ with 
$M(00)\geq m_{00}\fand M(10)\geq m_{10}$
there exists an even-parity martingale $D_M$ on $2^{\leq 2}$ such that $M=M_0+D_M$.
\item for any odd-parity martingale $N$ on $2^{\leq 2}$ with 
$N(0)\geq n_{0}\fand N(1)\geq n_{1}$
there exists an odd-parity martingale $D_N$ on $2^{\leq 2}$ such that $N=N_0+D_N$.
\end{itemize}
\end{lem}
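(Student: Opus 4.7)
The plan is to reduce the even-parity half to Lemma \ref{eyEw9DESd3} and to handle the odd-parity half by a direct calculation, in both cases exploiting the simple observation that the difference of two same-parity martingales is again a same-parity martingale (the martingale equation and the parity condition $M(\sigma\ast i)=M(\sigma)$ are both linear in $M$, so they are preserved under subtraction, provided the result remains non-negative).

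For the even-parity part, I would apply Lemma \ref{eyEw9DESd3} to obtain $M_0$ with $M_0(\lambda)=\max\{m_{00},m_{10}\}/2$, $M_0(00)=m_{00}$, $M_0(10)=m_{10}$; the remaining values $M_0(01), M_0(11)$ are then forced by the martingale equations at $0,1$ together with the even-parity identities $M_0(0)=M_0(1)=M_0(\lambda)$, which gives uniqueness. Given any even-parity martingale $M$ on $2^{\leq 2}$ with $M(00)\geq m_{00}$ and $M(10)\geq m_{10}$, the second clause of Lemma \ref{eyEw9DESd3} yields $M(\tau)\geq M_0(\tau)$ for every $\tau\in 2^{\leq 2}$, so setting $D_M:=M-M_0$ produces a non-negative function that inherits the martingale equation and the even-parity condition from $M$ and $M_0$ by linearity. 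Thus $D_M$ is an even-parity martingale and $M=M_0+D_M$.

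For the odd-parity part, note that on $2^{\leq 2}$ the odd-parity condition at length $1$ forces $N(00)=N(01)=N(0)$ and $N(10)=N(11)=N(1)$, while the martingale equation at $\lambda$ forces $N(\lambda)=(N(0)+N(1))/2$. Hence declaring $N_0(0)=n_0$ and $N_0(1)=n_1$ uniquely determines an odd-parity martingale $N_0$. For any odd-parity $N$ with $N(0)\geq n_0$ and $N(1)\geq n_1$, the pointwise inequality $N(\tau)\geq N_0(\tau)$ holds at every $\tau\in 2^{\leq 2}$ by inspection (the length-$2$ values are copied from length $1$, and the value at $\lambda$ is the average), so $D_N:=N-N_0$ is a non-negative odd-parity martingale, as required. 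There is no real obstacle to overcome here: the lemma is essentially a bookkeeping reformulation of Lemma \ref{eyEw9DESd3} together with its trivial odd-parity counterpart, packaged so that in the construction for Theorem \ref{yV79J5ZGp1} one can \emph{peel off} a minimal even- or odd-parity contribution from each enumerated single-parity supermartingale while retaining an enumerable remainder of the same parity.
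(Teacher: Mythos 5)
Your argument is correct and coincides with the paper's own proof: take $M_0$ from Lemma \ref{eyEw9DESd3} so that its domination clause makes $D_M:=M-M_0$ a non-negative even-parity martingale, and take $N_0$ as the unique odd-parity martingale with $N_0(0)=n_0$, $N_0(1)=n_1$ so that $D_N:=N-N_0$ is non-negative by direct inspection. The only difference is that you spell out the linearity and the pointwise checks that the paper leaves implicit; like the paper, the even-parity half rests entirely on the ``moreover'' clause of Lemma \ref{eyEw9DESd3}.
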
\begin{proof}
Let $M_0$ be as in Lemma \ref{eyEw9DESd3} so $D_M:=M-M_0$ is a non-negative even-parity martingale.
Also let $N_0$ be the unique odd-parity martingale on $2^{\leq 2}$ determined by
$N_0(0)=n_0, N_0(1)=n_1$ so $D_N:=N-N_0$ is a non-negative odd-parity martingale.
\end{proof}
\begin{lem}\label{DUzvpITARE}
Given $m_{00}, m_{10}, n_0, n_1, c\geq 0$
suppose that $M,N$ are even and odd betting supermartingales such that 
$M(\eta 00)\geq m_{00}, N(\eta 0)\geq n_0, M(\eta 10)\geq m_{10}, N(\eta 1)\geq n_1$, 
$M(\eta)+N(\eta)\leq c$ and
\begin{equation}\label{yYok7QvsB6}
m_{00}+n_0\geq c
\fand
m_{10}+n_1\geq c.
\end{equation}
where $\eta$ is a string of even length.
Then 
\[
\parb{n_0\leq n_1 \hspace{0.1cm}\Rightarrow\hspace{0.1cm}M(\eta 01)+N(\eta 01)\leq c}
\fand 
\parb{n_0> n_1\hspace{0.1cm}\Rightarrow\hspace{0.1cm}M(\eta 11)+N(\eta 11)\leq c}.
\]
\end{lem}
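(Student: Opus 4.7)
The plan is to unpack what even-parity and odd-parity mean at the positions of interest, apply the supermartingale inequality to get upper bounds on each quantity, and then use the two hypotheses in \eqref{yYok7QvsB6} to close the argument.

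First I would exploit the parity of $\eta$. Since $|\eta|$ is even and $M$ is even-parity, $M$ does not bet at $\eta$, so $M(\eta 0)=M(\eta 1)=M(\eta)$. On the other hand $|\eta 0|$ and $|\eta 1|$ are odd, so $M$ may bet on the next bit; the supermartingale inequality gives $M(\eta 00)+M(\eta 01)\leq 2M(\eta 0)=2M(\eta)$ and similarly $M(\eta 10)+M(\eta 11)\leq 2M(\eta)$. Combined with the hypothesized lower bounds on $M(\eta 00)$ and $M(\eta 10)$ this yields
\[
M(\eta 01)\leq 2M(\eta)-m_{00}
\hspace{0.3cm}\textrm{and}\hspace{0.3cm}
M(\eta 11)\leq 2M(\eta)-m_{10}.
\]
Dually $N$ is odd-parity so it bets at $\eta$ (even length) but not at $\eta 0$ or $\eta 1$ (odd lengths). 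Hence $N(\eta 01)=N(\eta 0)$, $N(\eta 11)=N(\eta 1)$, and $N(\eta 0)+N(\eta 1)\leq 2N(\eta)$, so $N(\eta 0)\leq 2N(\eta)-n_1$ and $N(\eta 1)\leq 2N(\eta)-n_0$.

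Next I would add the $M$- and $N$-bounds and apply the hypothesis $M(\eta)+N(\eta)\leq c$:
\[
M(\eta 01)+N(\eta 01)\leq 2(M(\eta)+N(\eta))-m_{00}-n_1\leq 2c-m_{00}-n_1,
\]
\[
M(\eta 11)+N(\eta 11)\leq 2(M(\eta)+N(\eta))-m_{10}-n_0\leq 2c-m_{10}-n_0.
\]

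Finally I would split on the two cases. If $n_0\leq n_1$ then $m_{00}+n_1\geq m_{00}+n_0\geq c$ by \eqref{yYok7QvsB6}, so the first display gives $M(\eta 01)+N(\eta 01)\leq c$. If instead $n_0>n_1$ then $m_{10}+n_0>m_{10}+n_1\geq c$, so the second display gives $M(\eta 11)+N(\eta 11)\leq c$. There is no substantive obstacle here: the only subtlety is pairing each $m$-constant with the \emph{smaller} of $n_0,n_1$ (i.e.\ $m_{00}$ with $n_1$ in case 1, $m_{10}$ with $n_0$ in case 2), which is exactly what the supermartingale inequality on $N$ at $\eta$ forces once the case split is made.
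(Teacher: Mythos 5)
Your proof is correct, and it takes a genuinely more elementary route than the paper's. You argue directly from the parity constraints and the supermartingale inequality at the three relevant nodes: $M(\eta 0)=M(\eta 1)=M(\eta)$ and $M(\eta i0)+M(\eta i1)\leq 2M(\eta)$ for $i<2$, together with $N(\eta 0)+N(\eta 1)\leq 2N(\eta)$ and $N(\eta ij)=N(\eta i)$; adding the resulting upper bounds and pairing $m_{00}$ with $n_1$ (respectively $m_{10}$ with $n_0$) lets \eqref{yYok7QvsB6} close each case, and the case split on $n_0\leq n_1$ versus $n_0>n_1$ is handled exactly as you say. The paper proceeds differently: it first replaces the supermartingales by the unique martingales agreeing with them on $\eta\ast 2^{\leq 2}$, then invokes the decomposition of Lemma \ref{KSltdSxMM7} (which rests on Lemma \ref{eyEw9DESd3}) to write $M+N=(M_0+N_0)+D_M+D_N$, tracks the amount the minimal even-parity component $M_0$ is forced to lose at $\eta 01$, and bounds the residual $D_M+D_N$ by the observation that a single-parity strategy can at most double its capital over two rounds. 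Your argument buys brevity and self-containment --- for this lemma it renders the reduction to martingales and the two auxiliary lemmas unnecessary --- whereas the paper's decomposition makes explicit the game-theoretic picture of canonical minimal strategies plus slack, which is the intuition behind how the adversarial string is selected in Lemma \ref{i8IYC2awRL}. Both arguments yield the same bound, and yours contains no gap.
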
\begin{proof}
Without loss of generality we may assume $\eta=\lambda$, and that 
$M,N$ are  martingales: in the case that they are
mere supermartingales we may consider the unique
even and odd betting martingales  $M',N'$ that agree with $M, N$ on the strings $\eta\ast 2^{\leq 2}$;
then $M'(\lambda)\leq M(\lambda)$, $N'(\lambda)\leq N(\lambda)$, $N(i)=N(ij)=N'(ij)=N'(i), i,j<2$,
so the hypothesis on $M,N$ applies on $M',N'$
and the application of the statement on $M',N'$ gives
\begin{itemize}
\item  $n_0\leq n_1\hspace{0.2cm}\Rightarrow\hspace{0.2cm}M(01)+N(0)=M'(\eta 01)+N'(\eta 0)\leq c$ 
\item  $n_0> n_1\hspace{0.2cm}\Rightarrow\hspace{0.2cm}M(11)+N(1)=M'(11)+N'(1)\leq c$.
\end{itemize}
Given $m_{00}, m_{10}, n_0, n_1, M,N$ consider the martingales $M_0, N_0, D_M, D_N$
given by Lemma \ref{KSltdSxMM7} so
\begin{equation}\label{c93HW8RXbZ}
M+N=(M_0+N_0)+D_M+D_N
\hspace{0.3cm}\textrm{and}\hspace{0.3cm}
c_0:=M_0(\lambda)+N_0(\lambda)\leq c.
\end{equation}
There are two rounds of bets corresponding to the two bits:
\begin{itemize}
\item  $M_0$ bets only on the second round (his bets may be dependent on the outcome of the first round)
\item $N_0$ bets only on the first round, so $N_0(ij)=N_0(i), i,j<2$.
\end{itemize}
If $n_0\leq n_1$ then  $N_0(0)$ is the loser so 
by the first of \eqref{yYok7QvsB6}, $M_0(00)=m_{00}, N_0(0)=n_0$ and the last of \eqref{c93HW8RXbZ},
$M(\eta 00)$ has to win (at least) the amount $N_0(\lambda)-N_0(0)$ that $N$ loses under outcome 0, 
plus the difference $c-c_0$ from the initial capital $c_0$ of $M_0+N_0$.
As a consequence, $M_0(01)$ has to lose the same amount so, since $M(0)=M(\lambda)$:
\begin{equation}\label{IpWNBMUttt}
M_0(01)+N_0(01)=
M_0(01)+N_0(0)\leq M(0)-(c-c_0)+N(\lambda)=c_0-(c-c_0).
\end{equation}
By \eqref{c93HW8RXbZ} we have $D_M(\lambda)+D_N(\lambda)\leq c-c_0$.
Since  $D_M, D_N$ are single-parity martingales, at the end of the two rounds  and at any outcome 
they can at most double their initial capital, which is $\leq c-c_0$, so
\[
D_M(01)+ D_N(01)\leq 2\cdot (c-c_0).
\]
Combining the above with \eqref{IpWNBMUttt} and \eqref{c93HW8RXbZ} we get that $M(01)+N(01)$ is bounded above by:
\[
M_0(01)+N_0(01)+D_M(01)+ D_N(01)\leq c_0-(c-c_0) +2\cdot (c-c_0)=c_0+(c-c_0)=c
\]
as required. The case $n_0> n_1$ is symmetric.
\end{proof}
\begin{lem}\label{i8IYC2awRL}
Given $c\geq 0$, string $\eta$ of even length and \lce even and odd betting supermartingales 
$M,N$ we can effectively enumerate $V\subseteq 2^{\leq 2}$ such that $|V|\leq 3$ and if
$M(\eta)+N(\eta)\leq c$ then $\exists\tau\in V:\  M(\tau)+N(\tau)\leq c$.
\end{lem}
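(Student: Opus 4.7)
The plan is to commit to two of the four length-$2$ extensions of $\eta$ unconditionally, then to use the \lce approximations of $M$ and $N$ to select a third extension whose capital sum must lie below $c$, invoking Lemma \ref{DUzvpITARE}. Since $M$ is even-parity and $|\eta|$ is even, $M(\eta\ast j)=M(\eta)$ for $j<2$, and since $N$ is odd-parity, $N(\eta j k)=N(\eta j)$ for $j,k<2$; the relevant capital sum at a length-$2$ extension is therefore $M(\eta j k)+N(\eta j)$.

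Let $(M_s),(N_s)$ be uniformly computable non-decreasing approximations to $M,N$ from below. First enumerate the strings $\eta 00$ and $\eta 10$ into $V$. In parallel, search for the least stage $s$ such that
\[
M_s(\eta 00)+N_s(\eta 0)>c \quad\text{and}\quad M_s(\eta 10)+N_s(\eta 1)>c.
\]
If such a stage $s$ is ever found, enumerate $\eta 01$ into $V$ when $N_s(\eta 0)\leq N_s(\eta 1)$ and enumerate $\eta 11$ into $V$ otherwise. At most three strings enter $V$, and the whole procedure is uniformly effective in $\eta$, $c$, and indices for $M,N$.

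For correctness, suppose $M(\eta)+N(\eta)\leq c$. If $M(\eta 00)+N(\eta 0)\leq c$ then $\eta 00\in V$ witnesses the conclusion; the case $M(\eta 10)+N(\eta 1)\leq c$ is analogous. Otherwise, monotonicity of the approximations forces a common stage $s$ at which both triggering inequalities hold. Setting $m_{00}:=M_s(\eta 00)$, $m_{10}:=M_s(\eta 10)$, $n_0:=N_s(\eta 0)$, $n_1:=N_s(\eta 1)$ produces valid lower bounds on the limit values of $M$ and $N$ which satisfy $m_{00}+n_0\geq c$ and $m_{10}+n_1\geq c$, so Lemma \ref{DUzvpITARE} applies and yields $M(\eta 01)+N(\eta 01)\leq c$ when $n_0\leq n_1$, and $M(\eta 11)+N(\eta 11)\leq c$ otherwise. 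This matches exactly the string added to $V$ in the third step.

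The one subtlety is that the comparison $n_0\leq n_1$ is recorded from stage-$s$ approximations, while the limit values $N(\eta 0)$ and $N(\eta 1)$ could eventually order in the opposite direction. This is not an obstruction because the hypothesis and conclusion of Lemma \ref{DUzvpITARE} are phrased in terms of the specific lower bounds supplied to it; any later growth of $N(\eta 0)$ or $N(\eta 1)$ does not invalidate the conclusion already extracted at stage $s$. Hence a single committed choice is enough, the enumeration terminates after at most three elements, and $V$ has the required property.
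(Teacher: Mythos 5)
Your proof is correct and follows essentially the same route as the paper's: commit to $\eta 00$ and $\eta 10$, wait for a stage at which both approximated capital sums exceed $c$, and then invoke Lemma~\ref{DUzvpITARE} with the stage-$s$ values to select the third string. Your explicit remark that the comparison $n_0\leq n_1$ is made on the recorded lower bounds rather than the limit values is a point the paper leaves implicit, and it is handled correctly.
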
\begin{proof}
We first enumerate $\eta00, \eta10$ into $V$ and wait until a stage $s$ such that
\[
M_s(\eta00)+N_s(\eta00)> c\fand M_s(\eta10)+N_s(\eta10)> c
\]
where $(M_t)$, $(N_t)$ are computable non-decreasing even and odd betting supermartingales converging to $M,N$.
These approximations can be effectively obtained from any \lce approximations to $M,N$, so the above is a
$\Sigma^0_1$ condition.
If and when this occurs at some stage $s$  we let $m_{00}:=M_s(\eta00)$,  $m_{10}:=M_s(\eta10)$, $n_0:= N_s(\eta00)$, $n_1:= N_s(\eta10)$ and
\begin{itemize}
\item if $n_0\leq n_1$ we enumerate  $\eta 01$ into $V$
\item if $n_0> n_1$ we enumerate  $\eta 11$  into $V$.
\end{itemize}
If the third string never gets enumerated into $V$, we have
$|V|= 2$ and $M(\tau)+N(\tau)\leq c$ for at least one of $00, 10$; otherwise
$|V|= 3$ and by Lemma \ref{DUzvpITARE} $M(\tau)+N(\tau)\leq c$ holds for the last string $\tau$ that was enumerated into $V$.
\end{proof}
We may now complete the proof of Theorem \ref{yV79J5ZGp1}.
Let $M, N$ be as in Lemma \ref{WAcx8aVmQB} and let
$D(\sigma):=M(\sigma)+N(\sigma)$, so $D(\lambda)\leq 1$.
Then by nested application of Lemma \ref{i8IYC2awRL}
we can define an array $(V_i)$  
satisfying the conditions of Lemma \ref{l33UQ7IjSb} and such that
for each $\sigma\in V_i$ such that $D(\sigma)\leq 1$ there exists $\sigma'\in V_{i+1}$ such that
$D(\sigma')\leq 1$. This condition shows that  there are infinite paths $x$ through the tree defined by $(V_i)$
such that $\forall n\ \parb{x\restr_{2n}\in V_n\fand D(x\restr_{2n})\leq 1}$.
By Lemma \ref{l33UQ7IjSb} it follows that 
$D(x\restr_n)=\bigon$
and by the optimality of $D$ by Lemma \ref{WAcx8aVmQB}, we have
$D'(x\restr_n)=\bigon$ for any single-parity \lce supermartingale $D'$.

\section{Hausdorff dimension and parity-betting: proof of Theorem \ref{OYNIGDVIET}}
Effective Hausdorff dimension dimension, as discussed in \S\ref{Sp9icqFzvI}, can  be
characterized of  in terms of effective statistical tests.
Given $s\in (0,1)$, an $s$-test is a uniformly \ce sequence $(V_i)$ of sets of strings such that
$\sum_{\sigma\in V_k} 2^{-s|\sigma|}<2^{-k}$ for each $k$. 
As reported  by \citet[\S 13.6]{rodenisbook}: 
\begin{equation}\label{IWvHgcPRHK}
\parbox{11cm}{given $s\in (0,1)$ one can effectively obtain an
effective list of all $s$-tests.}
\end{equation}
Since $s<1$, the condition $\sum_{\sigma\in V_k} 2^{-s|\sigma|}<2^{-k}$ means that
the length of each string in $V_k$ is more than $k$.
These observations will be used in the proof 
(i) of Theorem \ref{OYNIGDVIET}, which is Lemma \ref{Fa8iXDhuyB} below.
We say that $x$ is {\em weakly $s$-random} if it {\em avoids all $s$-tests} $(V_i)$, 
in the sense that there are only finitely many 
$i$ such that $x$ has a prefix in $V_i$. 
\citet{MR1888278} showed that the weak $s$-randomness of $x$  is equivalent to   
$\exists c\ \forall n\ K(x\restr_n)>s\cdot n-c$, so  by \eqref{kleTr2ZET}:
\begin{equation}\label{DitiNX8zan}
\dim (x)=\sup \{s : \textrm{$x$ is weakly $s$-random}\}.
\end{equation}
Clause (i) of Theorem \ref{OYNIGDVIET} is the following lemma.
\begin{lem}\label{Fa8iXDhuyB}
If $\dim z<1/2$ then there are strongly \lce martingales $N,T,$ such that
$N$ is even-parity, $T$ is odd-parity  and $\lim_n N(z\restr_n)=\lim_n T(z\restr_n)=\infty$.
\end{lem}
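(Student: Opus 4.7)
The plan is to convert the hypothesis $\dim z<1/2$ into a statistical test that $z$ fails at a level $s<1/2$, and to use that test to build even-parity and odd-parity martingales whose capital grows exponentially along $z$.

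First, since $\dim z<1/2$, fix a rational $s$ with $\dim z<s<1/2$; by \eqref{DitiNX8zan}, $z$ is not weakly $s$-random, so there is an $s$-test $(V_k)$ with $\sum_{\sigma\in V_k}2^{-s|\sigma|}<2^{-k}$ such that $z$ has a prefix in $V_k$ for infinitely many $k$. As noted after \eqref{IWvHgcPRHK}, $|\sigma|>k$ for each $\sigma\in V_k$, and without loss of generality each $V_k$ consists of strings of even length.

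Second, for each $\sigma\in V_k$ of length $2m$ I define a computable even-parity martingale $N_\sigma$ that starts with unit capital and, at each odd position $2j+1<2m$ at which the observed string is still consistent with $\sigma$, bets its entire current capital on $\sigma[2j+1]$; no further bet is placed past position $|\sigma|$ or after a mismatch. Then $N_\sigma(z\restr_n)=2^{|\sigma|/2}$ whenever $\sigma\prec z$ and $n\geq|\sigma|$. I take the weighted sum
\[
N\;:=\;\sum_{k,\,\sigma\in V_k}\,2^{-s|\sigma|}\,N_\sigma,
\]
so that $N(\lambda)<\sum_k 2^{-k}=1$ by the test bound, and $N$ is even-parity and l.c.e.\ as a c.e.\ sum of computable even-parity martingales (cf.\ Lemma~\ref{ttrAuakcfr}(a)). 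Whenever $\sigma\in V_k$ is a prefix of $z$ and $n\geq|\sigma|$ we have $N(z\restr_n)\geq 2^{-s|\sigma|}\cdot 2^{|\sigma|/2}=2^{(1/2-s)|\sigma|}>2^{(1/2-s)k}$; since $z$ hits $V_k$ for infinitely many $k$, this forces $\lim_n N(z\restr_n)=\infty$. The odd-parity martingale $T$ is constructed symmetrically, with bets placed on $\sigma[2j]$ at each even position $2j<|\sigma|$.

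The main obstacle I foresee is strengthening l.c.e.\ to \emph{strongly} l.c.e.\ for $N$ and $T$. Each individual $N_\sigma$ is computable and hence has a computable wager, but in the sum $N$ different $N_\sigma$'s may bet in opposite directions at a common odd-length position $\tau$, so the wager $|N(\tau 0)-N(\tau 1)|/2$ presents as the absolute value of a signed c.e.\ sum and is in general only d.c.e., not l.c.e. To secure the stronger property I would either enumerate the contributions in a canonical ``smallest-weight-first'' order and control the running wager at each $\tau$, or split the construction into two sub-martingales in which only one betting direction is ever used at each position, so that each piece has a monotone c.e.\ wager, and then verify that the exponential capital-growth estimate along $z$ is preserved after the split. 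Carrying out this technical step while keeping the finite initial-capital bound and the successful limit behavior is where I expect the bulk of the work of the proof to reside.
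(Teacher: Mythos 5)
Your construction is essentially the paper's: it likewise writes $N$ (and symmetrically $T$) as a c.e.-indexed sum of finite all-in single-parity martingales $N_\sigma$ over a $1/2$-test, the only difference being that the paper puts the weight $2^{-|\sigma|/2}$ on the initial capital of $N_\sigma$ so that each $\sigma\in V_i$ with $\sigma\prec z$ contributes a permanent $1$ to the capital along $z$ (versus your contribution $2^{(1/2-s)|\sigma|}\to\infty$), which is an immaterial variation. Regarding the strongly-l.c.e.\ issue you flag as the expected bulk of the work: the paper's own proof does not address it at all --- it simply exhibits $N$ and $T$ as effective sums of computable single-parity martingales and asserts the conclusion --- so you have not missed any device from the paper there, and your observation that opposite-direction bets at a common position make the aggregate wager only d.c.e.\ on its face is a legitimate point that the paper leaves implicit.
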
\begin{proof}
Let $(V_i)$ be a universal 1/2-test, so that every $z$ which is not weakly 1/2-random has prefixes in infinitely many $V_i$.
We define computable families $(N_{\sigma}), (T_{\sigma})$ of even-parity and odd-parity strategies respectively, indexed by strings, and let:
\[
N:=\sum_i \sum_{\sigma\in V_i} N_{\sigma}
\hspace{0.5cm}\textrm{and}\hspace{0.5cm}
T:=\sum_i \sum_{\sigma\in V_i} T_{\sigma}.
\]
Since each $N_{\sigma}$ is even-parity,  the same is true of $N$, and in the same way $T$ is odd-parity.
For each $i,\sigma$ strategy $N_{\sigma}$ starts with $N_{\sigma}(\lambda)=2^{-|\sigma|/2}$ and 
at each $\rho\prec\sigma$ of even length it bets all capital on $\sigma(|\rho|)$, while  placing no bets at odd positions. Formally:
\[
N_{\sigma}(\rho)=
\left\{\begin{array}{cl}
0&\textrm{if $|\hat\rho|$ is even and $\rho\not\preceq\sigma$}\vspace{0.1cm}\\
2\cdot N_{\sigma}(\hat\rho)&\textrm{if $|\hat\rho|$ is even and $\rho\preceq\sigma$}\vspace{0.1cm}\\
N_{\sigma}(\hat\rho)&\textrm{if $|\hat\rho|$ is odd or $|\rho|>|\sigma|$}
\end{array}\right\}
\hspace{0.3cm}\textrm{where $\hat\rho$ denotes the predecessor of $\rho$.}
\]
Then for each $\rho\succ\sigma$ we have $N_{\sigma}(\rho)=1$.
If $\dim z<1/2$ then $z$ has prefixes in infinitely many $V_i$ so $\lim_n N(z\restr_n)=\infty$.
The definition of the odd-parity $T_{\sigma}$ is analogous, as well as the proof that
$\lim_n N(z\restr_n)=\infty$, provided that $\dim z<1/2$.
\end{proof}
\begin{lem}\label{ovDSgtyi1l}
Given a \lce supermartingale $(M_s)\to M$ and $n$ we can effectively define
a \lce martingale $(N_s)\to N$ on $2^{\leq n}$ such that $\forall \sigma\in 2^{\leq n}\ N(\sigma)\leq M(\sigma)$ and 
$\forall \sigma\in 2^n\ M(\sigma)=N(\sigma)$.
Moreover if $M$ is even-parity, so can $N$, and the same holds for odd-parity.
\end{lem}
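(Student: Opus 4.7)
My plan is to define $N$ on $2^{\leq n}$ by reverse induction from the leaves. Set $N(\sigma):=M(\sigma)$ whenever $|\sigma|=n$, and for $|\sigma|<n$ set
\[
N(\sigma) := \tfrac{1}{2}\bigl(N(\sigma\ast 0)+N(\sigma\ast 1)\bigr).
\]
The martingale equation and the leaf equality $N=M$ on $2^n$ are built in. The dominance $N(\sigma)\leq M(\sigma)$ follows by reverse induction on $n-|\sigma|$: equality holds at the base, and the inductive step
\[
N(\sigma)=\tfrac{1}{2}\bigl(N(\sigma\ast 0)+N(\sigma\ast 1)\bigr)\leq \tfrac{1}{2}\bigl(M(\sigma\ast 0)+M(\sigma\ast 1)\bigr)\leq M(\sigma)
\]
combines the inductive hypothesis with the supermartingale inequality for $M$.

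For the \lce approximation, fix a computable non-decreasing approximation $(M_s)\nearrow M$ and define $N_s$ on $2^{\leq n}$ by the same reverse recurrence, with leaf values $M_s(\sigma)$. Unwinding, $N_s(\sigma)$ is the uniform average of the $M_s$-values over the $2^{n-|\sigma|}$ leaf extensions of $\sigma$; this is non-decreasing in $s$, converges to $N(\sigma)$, and is uniformly computable from $(M_s)$, yielding the \lce approximation $(N_s)\to N$.

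For the parity-preservation clause, when $M$ is even-parity I aim for an even-parity $N$ by maintaining, as the reverse-induction invariant, that at every $\sigma$ of even length the sibling values $N(\sigma\ast 0)$ and $N(\sigma\ast 1)$ already agree, so that the averaging step is forced to collapse to their common value. The main obstacle is that the supermartingale inequality for $M$ only controls each of the sums $M(\sigma\ast 0\ast 0)+M(\sigma\ast 0\ast 1)$ and $M(\sigma\ast 1\ast 0)+M(\sigma\ast 1\ast 1)$ separately below an even-length $\sigma$, so the two odd-length sibling averages produced by the naive recurrence can disagree even though $M(\sigma\ast 0)=M(\sigma\ast 1)=M(\sigma)$. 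To restore parity I replace the naive sibling averages at each even-length node by their common minimum and propagate this reduced amount upward; the resulting value is still $\leq M(\sigma)$ by the even-parity equality above, and since the pointwise minimum of two non-decreasing sequences is itself non-decreasing, the variant recurrence still yields a \lce approximation $(N_s)$ of the modified $N$. The odd-parity case is handled symmetrically.
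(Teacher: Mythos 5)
Your treatment of the first two claims is correct and is the natural argument: backward averaging from the leaves builds in the martingale identity and the leaf equality, the dominance $N\leq M$ follows by reverse induction using the supermartingale inequality for $M$, and since $N_s(\sigma)$ unwinds to the uniform average of $M_s$ over the $2^{n-|\sigma|}$ leaf extensions of $\sigma$, the approximation $(N_s)$ is uniformly computable, non-decreasing, and converges to $N$.

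The parity clause is where your argument has a genuine gap. Replacing the two sibling averages below an even-length node $\sigma$ by their common minimum and pushing that value upward destroys the martingale identity at the odd-length siblings: if $a_0:=\tfrac12\bigl(N(\sigma 00)+N(\sigma 01)\bigr)>a_1:=\tfrac12\bigl(N(\sigma 10)+N(\sigma 11)\bigr)$ and you set $N(\sigma 0)=\min(a_0,a_1)=a_1$, then $N(\sigma 0)<\tfrac12\bigl(N(\sigma 00)+N(\sigma 01)\bigr)$, so what you produce is only a supermartingale at $\sigma 0$, not a martingale. Moreover this is not a repairable slip in your construction: any even-parity martingale satisfies $N(\sigma 0)=N(\sigma 1)$ at every even-length $\sigma$, hence forces $\tfrac12\bigl(N(\sigma 00)+N(\sigma 01)\bigr)=\tfrac12\bigl(N(\sigma 10)+N(\sigma 11)\bigr)$, and combined with exact agreement with $M$ on $2^n$ this imposes an identity on the leaf values of $M$ that a general even-parity supermartingale need not satisfy. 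Concretely, for $n=2$ take $M(\lambda)=M(0)=M(1)=1$, $M(00)=1$, $M(01)=0$, $M(10)=2$, $M(11)=0$: this is an even-parity left-c.e.\ supermartingale, yet any martingale agreeing with it on $2^2$ has $N(0)=1/2\neq 1=N(1)$ and so cannot be even-parity. Thus the three requirements (martingale identity everywhere, even-parity, equality with $M$ on $2^n$) are jointly unsatisfiable in general, and the ``moreover'' clause cannot be proved as literally stated; a correct version must weaken something, e.g.\ keep the martingale identity only at the betting (odd-length) nodes and take the maximum of the two sibling values at the non-betting nodes, which preserves $N\leq M$, agreement on $2^n$, and the growth factor $2^{(|\tau|-|\sigma|)/2}$ that Lemma \ref{T2RhWOVdo} actually uses. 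You should flag this incompatibility rather than present the minimum construction as a proof.
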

We call the martingale $N$ of Lemma \ref{ovDSgtyi1l} the {\em martingale-floor of $M$ on $2^{\leq n}$} and denote it by $M^n$.

Let $N,T$ be optimal even and odd parity \lce supermartingales so by (b) of Lemma \ref{ttrAuakcfr} it suffices to define 
$(\sigma_n)$, $\sigma_i\prec \sigma_{i+1}$ such that 
\begin{equation}\label{f16YoYmaqa}
M(\sigma_n):=N(\sigma_n)+T(\sigma_n)=\bigon\ \fand\ 
\lim_n\frac{K(\sigma_n)}{|\sigma_n|}=1/2
\end{equation}
and let $x:=\lim_n \sigma_n$.
For the second  of \eqref{f16YoYmaqa} it suffices to define 
a \pf machine $V$ such that
\begin{equation}\label{ivX9i8OUls}
K_V(\sigma_n)\leq |\sigma_n|\cdot q_n
\hspace{0.3cm}\textrm{where}\hspace{0.3cm}
q_n:=1/2+ 3/(n+2).
\end{equation}
and $K_V$ is the Kolmogorov complexity with respect to $V$.
Without loss of generality we can assume that $M(\lambda)<2^{-1}$.
For the first of \eqref{f16YoYmaqa} it suffices that
\begin{equation}\label{FRPmaVyVWx}
M(\sigma_n)\leq  2^{-1}+\sum_{i< n} 2^{-i-2}.
\end{equation}
One way to think about this requirement is to try to ensure that
$M(\sigma_n)-M(\sigma_{n-1})\leq 2^{-n-1}$ for all $n$.
Supposing inductively that $\sigma_{n-1}$ has been determined, the task of 
keeping $M(\sigma_n)-M(\sigma_{n-1})$ 
small potentially involves  changing the approximation to $\sigma_n$ a number of times,
since $M$ is a \lce supermartingale. 
This instability of the final value of $\sigma_n$ is in conflict with 
\eqref{ivX9i8OUls}.
The idea for handling this conflict is that a single-parity
strategy is limited to winning on at most half of the available bits.
With such a limitation on the components $N,T$ of $M$, 
the growth potential of $M$ is also limited, in a way that allows the satisfaction of  \eqref{ivX9i8OUls}.
This works if $M$ is a martingale, so any increase in the initial capital has an immediate effect on a given of outcome trials,
but we may overcome this issue as follows.
\begin{lem}[Growth along special extensions]\label{T2RhWOVdo}
Let $N,T$ be even and odd-parity \lce supermartingales with canonical approximations $(N_s)$, $(T_s)$,
and let $M_s:=N_s+T_s$. 
Given even $k$, $\tau\in 2^k$, $\sigma\prec \tau$ of even length and $s$:
\[
\forall t>s\ \parB{M^k_t(\sigma)-M^k_s(\sigma)<2^{-p}
\ \Rightarrow\ M_t(\tau)<  M_s(\tau) + 2^{(|\tau|-|\sigma|)/2-p}}
\]
where $M^k:=N^k+T^n$  and $N^k, T^k$ are the martingale-floors of $N,T$
and $(N^k_s), (T^k_s)$ are their canonical martingale approximations.
\end{lem}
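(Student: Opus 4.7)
My plan is to exploit the parity structure of $N$ and $T$ together with the averaging formula satisfied by the martingale-floors. First, note that because $N$ is even-parity, the identity $N(\eta\ast i) = N(\eta)$ for $|\eta|$ even says that the bit appended at position $|\eta|$ is irrelevant to $N$; unrolling this, $N_t(\sigma\tau')$ for any $\tau'\in 2^{k-|\sigma|}$ depends only on the bits of $\tau'$ at the odd absolute positions $|\sigma|+1, |\sigma|+3,\ldots, k-1$. Setting $a := k-|\sigma|$ (even by hypothesis), there are therefore only $2^{a/2}$ distinct values among the $2^a$ evaluations $N_t(\sigma\tau')$, and each one is attained exactly $2^{a/2}$ times.

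Next, since $N^k$ is the martingale-floor of $N$ on $2^{\leq k}$, it agrees with $N$ on $2^k$ and is a genuine martingale below level $k$, so the usual bottom-up averaging gives
\[
N^k_t(\sigma) \;=\; 2^{-a}\sum_{\tau'\in 2^a} N_t(\sigma\tau') \;=\; 2^{-a/2}\sum_{\rho} N^{\mathrm{ext}}_t(\sigma,\rho),
\]
where $\rho$ ranges over the $2^{a/2}$ possible odd-position patterns and $N^{\mathrm{ext}}_t(\sigma,\rho)$ denotes the common value. For the chosen $\tau\in 2^k$, $N_t(\tau)$ is exactly one of these summands, hence $N_t(\tau)\leq 2^{a/2} N^k_t(\sigma)$. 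Because the canonical approximation of an \lce supermartingale is non-decreasing pointwise in $t$, the identical averaging applied to the differences $N_t(\sigma\tau')-N_s(\sigma\tau')\geq 0$ yields
\[
N_t(\tau) - N_s(\tau) \;\leq\; 2^{a/2}\bigl(N^k_t(\sigma) - N^k_s(\sigma)\bigr).
\]
By the symmetric argument --- $T$ is odd-parity, so it ignores bits at odd absolute positions, and the roles of even and odd are swapped --- the analogous inequality holds for $T$.

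Adding the two bounds and using $M=N+T$, $M^k = N^k+T^k$ together with the hypothesis $M^k_t(\sigma)-M^k_s(\sigma) < 2^{-p}$, we conclude
\[
M_t(\tau) - M_s(\tau) \;\leq\; 2^{a/2}\bigl(M^k_t(\sigma) - M^k_s(\sigma)\bigr) \;<\; 2^{a/2}\cdot 2^{-p} \;=\; 2^{(|\tau|-|\sigma|)/2 - p},
\]
which is what is claimed. There is no substantive obstacle; the one piece to get right is the parity bookkeeping, namely that the definition $N(\eta\ast i)=N(\eta)$ for $|\eta|$ even makes $N$ insensitive to bits at \emph{even} absolute positions, so that summing over the ``irrelevant-to-$N$'' coordinates in the martingale-floor identity contributes a factor $2^{a/2}$ rather than $2^a$. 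Once this is in place the rest is the monotonicity of the canonical approximations and the averaging identity for a martingale.
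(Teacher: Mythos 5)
There is a genuine gap at the heart of your argument: the claim that $N_t(\sigma\tau')$ depends only on the bits of $\tau'$ at odd absolute positions is false. The even-parity condition $N(\eta\ast i)=N(\eta)$ for $|\eta|$ even only says that $N$ places no \emph{wager} on the bit at an even position; it does not make later values insensitive to that bit, because the bets placed at subsequent odd positions may be conditioned on it (this is exactly the ``online access'' feature stressed in \S\ref{SIqhiJbskS}). A concrete counterexample: let $D(\lambda)=D(0)=D(1)=1$, $D(00)=D(11)=2$, $D(01)=D(10)=0$. This is a non-negative even-parity martingale, yet $D(00)\neq D(10)$ even though $00$ and $10$ agree on their odd-position bit. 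Consequently the $2^a$ values $N_t(\sigma\tau')$ do not group into $2^{a/2}$ classes of equal value and equal size (already for $a=4$ in the example above the value $0$ occurs $12$ times and the maximum occurs $4$ times), and your averaging identity by itself only yields the trivial bound $N_t(\tau)\leq 2^{a}N^k_t(\sigma)$, i.e.\ a factor $2^{|\tau|-|\sigma|}$ rather than the required $2^{(|\tau|-|\sigma|)/2}$.

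The inequality you are after, $N_t(\tau)-N_s(\tau)\leq 2^{a/2}\bigl(N^k_t(\sigma)-N^k_s(\sigma)\bigr)$, is nevertheless true, and the correct justification (which is the one the paper uses) is a path argument rather than an averaging argument: $N^k_t-N^k_s$ is a non-negative even-parity martingale on $2^{\leq k}$, so along the single path from $\sigma$ to $\tau$ its value is unchanged at each of the $a/2$ non-betting positions and can at most double at each of the $a/2$ betting positions, since $E(\eta 0)+E(\eta 1)=2E(\eta)$ with both terms non-negative forces $E(\eta i)\leq 2E(\eta)$. Hence the increment at $\tau$ is at most $2^{a/2}$ times the increment at $\sigma$; adding the symmetric bound for the odd-parity $T$ gives the lemma. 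Your overall architecture (pass to the non-negative increments, exploit parity to obtain the exponent $a/2$, then sum the two bounds) matches the paper's; only the justification of the factor $2^{a/2}$ needs to be replaced.
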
\begin{proof}
Since $M^k$ agrees with $M$ on $2^k$ it suffices to show that for all $s$ and even $k$
\[
\forall \tau\in 2^k, \sigma\prec \tau\ 
\forall t>s\ \parB{M^k_t(\sigma)-M^k_s(\sigma)<2^{-p}
\ \Rightarrow\ M^k_t(\tau)<  M^k_s(\tau) + 2^{(|\tau|-|\sigma|)/2-p}}
\]
where $|\sigma|$ is assumed to be even.
Since $N^k_s\leq N^k_t$,  $T^k_s\leq  T^k_t$ are martingales, 
the increase $M^k_t(\tau)-M^k_s(\tau)=(N^k_t(\tau)-N^k_s(\tau))+(T^k_t(\tau)-T^k_s(\tau))$ must have
come due to an increase $M^k_t(\sigma)-M^k_s(\sigma)=(N^k_t(\sigma)-N^k_s(\sigma))+(T^k_t(\sigma)-T^k_s(\sigma))$ 
of the two initial capitals, and the capital gain from betting the latter on the 
$|\tau|-|\sigma|$ bits from $\sigma$ to $\tau$. Since each $N^k_t-N^k_s, T^k_t-T^k_s$ actually bets on    
$(|\tau|-|\sigma|)/2$ of these bits it follows that the capital gain will be at most
$2^{(|\tau|-|\sigma|)/2}$ times the initial capital increase.
\end{proof}

Let  $(M_s)$ be a canonical approximation to $M$ and let $M^k, M_s^k$ be as in Lemma \ref{T2RhWOVdo}.
For each $n$ the segment $\sigma_n$ as 
well as its approximations will have a fixed length $s_n$
which we define later.
For the approximations to 
$\sigma_n$ with $n>0$, we will apply Lemma \ref{T2RhWOVdo}
certain values  $p_n$ of $p$.
By the bound given in Lemma \ref{T2RhWOVdo} 
in order for the growth of $M^{s_n}(\sigma_n)$ at each length $s_n$
to be bounded above by $2^{-n-2}$, we need to set:
\begin{equation}\label{dQnx9IqQ4D}
p_n=s_n/2+n+2.
\end{equation}
We now motivate and define the value of $s_n$:
Suppose that $n>0$ and 
our choice of $\sigma_{n-1}$ has settled, but that now we 
are forced to choose a new value of $\sigma_n$, because the 
capital on some initial segment has increased by too much. 
What does Lemma \ref{T2RhWOVdo} tell us about the increase 
in capital, $2^{-p_n}$ say, that must have seen at $\sigma_{n-1}$ 
in order for this to occur? A bound for $p_n$ gives a corresponding 
bound on the number of times that $\sigma_n$ will have to be chosen: after $\sigma_{n-1}$ has settled
the approximation to the next initial segment $\sigma_n$ can change at most $2^{p_n}$ many times.
Overall, $\sigma_n$ can then change at most $2^{\sum_{i<n} p_i} \cdot 2^{p_n}$ many times, and
in order to satisfy \eqref{ivX9i8OUls}, at each of these changes we 
need to enumerate to the machine $V$ a description of
length $q_n\cdot s_n$. In order to keep the weight of these 
requests bounded, we will aim at keeping the total
weight of the requests corresponding to $\sigma_n$ bounded 
above by $2^{-n}$, for which it is sufficient that:
\begin{equation}\label{evywicXKuu}
2^{-s_nq_n} \cdot 2^{p_n}\cdot 2^{\sum_{i<n} p_i} <2^{-n}\iff
2^{p_n-s_nq_n}  <2^{-n-\sum_{i<n} p_i}\iff
s_nq_n-p_n>n+\sum_{i<n} p_i.
\end{equation}
By \eqref{dQnx9IqQ4D} we get
$p_n-q_ns_{n}=n+2+s_{n}\cdot\left(1/2-q_n\right)$ so \eqref{evywicXKuu} reduces to:
\[
s_{n}\cdot\left(q_n-1/2\right)> 2n+2+\sum_{i<n} p_i\iff
s_{n}\geq 
\frac{2n+2+ \sum_{i<n} p_i }{q_n-1/2}.
\]
Hence it suffices to set:
\begin{equation}\label{uzDUjAkZU3}
s_{n}=3\cdot \frac{2n+2+ \sum_{i<n} p_i }{q_n-1/2}=(n+2)\cdot \parB{2n+2+ \sum_{i<n} p_i }.
\end{equation}
We are ready to inductively define the approximations $\sigma_n[s]$ of $\sigma_n$ for all $n$, in stages $s$.
At stage $s+1$ the segment $\sigma_n$
{\em requires attention} if $n>0$ and 
\[
\parB{\sigma_n[s]\de\fand M_{s+1}(\sigma_n[s])> 2^{-1}+\sum_{i<n} 2^{-i-2}}\ \vee\ 
\sigma_n[s]\un.
\]
In the argument below the suffix $[s]$ on an expression, as in $M(\sigma_n)[s]$, indicates that all
parameters in the expression (here $M, \sigma_n$) are evaluated at the end of stage $s$. 
Let $\sigma_0[s]=\lambda$ for all $s$ and $s_0=0$.  

{\bf Construction of $(\sigma_n)$.} At stage $s+1$ pick the 
least $n\leq s$ such that $\sigma_n$ requires attention, if such exists.
\begin{enumerate}[(a)]
\item If $\sigma_n[s]\un$, let $\sigma_n[s+1]$  be the leftmost extension $\tau$ of $\sigma_{n-1}[s]$
with  $|\tau|=s_n$, $M_s(\tau)\leq M(\sigma_{n-1})[s]$.
\item If $\sigma_n[s]\de$, set $\sigma_i[s+1]\un$ for all $i\geq n$. 
\end{enumerate}
%
%\begin{itemize}
%\item capital gain/loss
%\item (COMPOUND INTEREST, DIVIDEND, TOTAL RETURN, TIME HORIZON, VOLATILITY )
%\end{itemize}
In any case,
let $k\leq s$  least (if such exists) such that $\sigma_k[s+1]\de$ and 
$K_{V_{s}}(\sigma_k[s+1])>q_k \cdot s_k$,
and issue a $V$-description of $\sigma_k[s+1]$ of length $q_k \cdot s_k$.

{\bf Verification.}
Since $M_s$ is a supermartingale, the string $\tau$ of clause (a) exists,
so the construction of $(\sigma_n[s])$ is well-defined. 
In any interval of stages where $\sigma_{n-1}$ 
remains defined, successive values of $\sigma_n$ are lexicographically increasing.
It follows that 
each $\sigma_n[t]$ converges to a final value $\sigma_n$ such that $\sigma_n\prec \sigma_{n+1}$.
The real $x$ determined by the initial segments $\sigma_n$ is thus \lce and by the construction:
\[
M(\sigma_n)[s+1]\leq   2^{-1}+\sum_{i<n} 2^{-i-2}
\]
at all stages $s$ where $\sigma_n$ is defined, so $M(x\restr_n)<1$ for all $n$. 

It remains to show that the weight of $V$ is bounded above by 1.
Suppose that $\sigma_n$ gets newly defined at stage $s+1$ and at stage $t>s+1$ it becomes
undefined, while $\sigma_{n-1}[j]\de$ for all $j\in [s,t]$. 
Then  
\[
M(\sigma_n)[s+1]\leq  M(\sigma_{n-1})[s] 
\hspace{0.3cm}\textrm{and}\hspace{0.3cm}
M_{t}(\sigma_n[s+1])> 2^{-1}+\sum_{i< n} 2^{-i-2}
\]
where the latter is due to the fact that $\sigma_n$ becomes undefined at stage $t$.
By Lemma \ref{T2RhWOVdo} and \eqref{dQnx9IqQ4D},
$M^{s_n}_{t}(\sigma_{n-1}[s+1])-M^{s_n}(\sigma_{n-1})[s+1]>2^{-p_n}$, so
\[
\parbox{13.5cm}{during an interval of  stages  where $\sigma_{n-1}$ remains defined, 
$\sigma_n$ can take at most $2^{p_n}$ values}
\]
which means that the weight of the $V$-descriptions that we enumerate for strings of length $s_n$
is at most $2^{-s_nq_n} \cdot 2^{\sum_{i\leq n} p_i}$. 
By the definition of $s_n$ in \eqref{uzDUjAkZU3} and \eqref{evywicXKuu} this weight is bounded
above by $2^{-n}$. So the total weight of the descriptions that are enumerated
into $V$ is at most 1.

\section{Conclusion}\label{mcSrWpox}
Muchnik's paradox says that some reals are predictable with respect to \lce supermartingales
but unpredictable with respect to single-parity \lce supermartingales. Informally,
{\em some \lce strategies are irreducible to single-parity strategies}. We have characterized the power of
single-parity \lce supermartingales and martingales in terms of effective Hausdorff dimension: reals with $\dim(x)< 1/2$
are predictable with respect to both odd and even parity martingales, while there exists $x$ with 
$\dim(x)= 1/2$ on which all single-parity supermartingales fail. Moreover, using a different argument, we showed that
there are reals $x$ with effective packing dimension as low as $\log \sqrt{3}\approx 0.79$, yet 
no single-parity \lce supermartingale succeeds on $x$. The following question arises:
\begin{equation*}
\parbox{13.5cm}{how low can the effective packing dimension of reals exhibiting  Muchnik's paradox be?}
\end{equation*}
Since $\dim(x)\leq \Dim(x)$, our results  say that it cannot be less than 1/2; however $[1/2, \log \sqrt{3})$ is 
a gray area. Questions regarding the predictability of reals exhibiting  Muchnik's paradox
are questions about the power of single-parity betting.

We also exhibited Muchnik's paradox in the case of computable integer-valued  (super)martingales.
This is interesting since this phenomenon does not occur in the case of computable (super)martingales.
Integer-valued strategies represent a specific example of a wager-restriction, so
a related question is 
\begin{equation*}
\parbox{12cm}{which wager restrictions permit the irreducibility of a computable strategy
to computable single-parity strategies?}
\end{equation*}
Finally we discussed the case of \lce single-sided strategies and why they appear to be more powerful then
the single-parity \lce strategies. Using different methods, \citet{sided21}  
showed that there exists non-random $x$ such that no single-sided 
\lce supermartingale succeeds on $x$. However the  analogue of our Theorem \ref{OYNIGDVIET} remains open:
\begin{equation*}
\parbox{13cm}{
How small can the effective Hausdorff and packing dimensions of $x$ be
if no single-sided \lce supermartingale succeeds on $x$?}
\end{equation*}
Only the lower bound 1/2 is known; for example, we don't know if such $x$ can have $\dim(x)<1$.

\bibliographystyle{abbrvnat}
\bibliography{muchpar}

\end{document}